\documentclass[12pt]{article}

\usepackage{amsmath,amsfonts,amssymb}
\usepackage{bbm}
\setcounter{page}{1}

\frenchspacing
\textwidth              16 cm
\textheight             22.5 cm
\topmargin              0.0  cm
\evensidemargin         0.0 cm
\oddsidemargin          0.0 cm

%%%%%%%%%%%%%%%%%%%%%%%%%%%%%%%%%%%%%%%%%%%%%%%%%%%%%%%%%%%%%%%%%%%%%%%%%%%%%
%           U M G E B U N G E N                                             %
%%%%%%%%%%%%%%%%%%%%%%%%%%%%%%%%%%%%%%%%%%%%%%%%%%%%%%%%%%%%%%%%%%%%%%%%%%%%%

\newtheorem{defn}{Definition}[section]
\newtheorem{theo}[defn]{Theorem}
\newtheorem{lem}[defn]{Lemma}
\newtheorem{prop}[defn]{Proposition}

\newtheorem{rem}[defn]{Remark}
\newtheorem{exam}[defn]{Example}
\newenvironment{proof}{{\bf Proof }}{{\vskip 0.1cm \hfill$\Box$}}

\def\N {{\mathbb N}}

\def\R {{\mathbb R}}
\def\E{{\mathbb E}}
\def\P{{\mathbb P}}
\def\M{{\mathbb M}}
\newcommand{\F}{\mathcal{F}}

%---------------------------------------------------------------------------
%---------------------------------------------------------------------------
%---------------------------------------------------------------------------

\begin{document}

\noindent
{{\Large\bf Existence and regularity of infinitesimally invariant measures, transition functions and time homogeneous It\^o-SDEs}
{\footnote{This research was supported by Basic Science Research Program through the National Research Foundation of Korea(NRF) funded by the Ministry of Education(NRF-2017R1D1A1B03035632).}}\\ \\
\bigskip
\noindent
{\bf Haesung Lee},
{\bf Gerald Trutnau}
}\\

\noindent
{\bf Abstract.} We show existence of an infinitesimally invariant measure $m$ for a large class of divergence and non-divergence form elliptic second order partial differential operators with locally Sobolev regular diffusion coefficient and drift of some local integrability order. Subsequently, we derive regularity properties of the corresponding semigroup which is defined in $L^s(\mathbb{R}^d,m)$, $s\in [1,\infty]$, including the classical strong Feller property and classical irreducibility. This leads to a transition function of a Hunt process that is explicitly identified as a solution to an SDE. Further properties of this Hunt process, like non-explosion, moment inequalities, recurrence and transience, as well as ergodicity, including invariance and uniqueness of $m$, and uniqueness in law, can then be studied using the derived analytical tools and tools from generalized Dirichlet form theory. 
\\ \\
\noindent
{Mathematics Subject Classification (2010): primary; 47D07, 35B65, 60J35; secondary: 60H20, 35J15, 60J60.}\\

\noindent 
{Keywords: elliptic and parabolic regularity, strong Feller property, invariant measure, Krylov type estimate, moment inequalities, uniqueness in law, It\^o-SDE.} 
\section{Introduction}
Throughout, we let the dimension $d\ge 2$. 
We investigate a quite general class of divergence form operators with respect to a possibly non-symmetric diffusion matrix $(a_{ij}+c_{ij})_{1\le i,j\le d}$ and perturbation $\mathbf{H}=(h_1,\ldots,h_d)$, which can be written as
\begin{eqnarray}\label{defofLdivform}
Lf & = & \frac12 \sum_{i,j=1}^{d} \partial_i((a_{ij}+c_{ij})\partial_j)f+\sum_{i=1}^{d}h_i\partial_i f, \quad f \in C_0^{\infty}(\R^d).
\end{eqnarray}
Here, we consider the {\bf assumption}
\begin{itemize}
\item[(a)] $A= (a_{ij})_{1\le i,j\le d}$ is a $d \times d$ matrix of functions, such that $a_{ji}=a_{ij}\in H_{loc}^{1,2}(\R^d) \cap C(\R^d)$ for all $1 \leq i, j \leq d$ and such that for every open ball $B \subset \R^d$, there exist positive real numbers $\lambda_{B}$, $\Lambda_{B}$ with
\begin{eqnarray}\label{uniformell}
\lambda_{B} \| \xi \|^2 \leq \langle A(x) \xi, \xi \rangle \leq \Lambda_{B} \| \xi \|^2 \text{ for all } \xi \in \R^d, x \in B.
\end{eqnarray}
$\mathbf{H}=(h_1, \ldots, h_d) \in L_{loc}^p(\R^d, \R^d)$, i.e. $h_i\in  L_{loc}^p(\R^d)$, $1\le i\le d$, for some $p>d$, and $C = (c_{ij})_{1\le i,j\le d}$ is a  $d \times d$ matrix of functions, with $-c_{ji}=c_{ij} \in H_{loc}^{1,2}(\R^d) \cap C(\R^d)$  for all $1 \leq i,j \leq d$,
\end{itemize}
and the  {\bf assumption}
\begin{itemize}
\item[(b)] $\frac{1}{2}\nabla \big (A+C^{T}\big )+ \mathbf{H} \in L_{loc}^q(\R^d, \R^d)$, where throughout $q:= \frac{pd}{p+d}$,
\end{itemize}
on the coefficients of $L$.\\
Our first observation is that just under assumption (a), there exists a density $\rho$, which determines an infinitesimally invariant measure $m=\rho\,dx$ for $(L,C_0^{\infty}(\R^d))$, and which has a nice regularity (see Theorem \ref{eim}). This extends \cite[Theorem 1(i)]{BRS} (cf. Remark \ref{ex1}, where it is also shown that such operators cover a fairly general class of non-divergence form operators) and leads by a construction method of \cite{St99} to a $C_0$-semigroup of sub-Markovian contractions $(T_t)_{t\ge 0}$ on $L^1(\R^d,m)$, whose generator is an extension of $(L,C_0^{\infty}(\R^d))$, i.e. we have found a suitable functional analytic frame for $(L,C_0^{\infty}(\R^d))$. This functional analytic frame is also described by a generalized Dirichlet form. Subsequently in Section \ref{two}, we investigate the regularity properties of the semigroup $(T_t)_{t\ge 0}$ and its corresponding resolvent $(G_{\alpha})_{\alpha>0}$, which can in fact be considered in every $L^s(\R^d,m)$, $s\in [1,\infty]$. The regularity properties comprise strong Feller properties, i.e. the existence of continuous versions $P_t f$, $f\in L^\infty(\R^d,m)+L^1(\R^d,m)$ and $R_{\alpha}g$, $g\in L^\infty(\R^d,m)+L^q(\R^d,m)$, of $T_t f$ and $G_{\alpha} g$, as well as the irreducibility of $(P_t)_{t>0}$ (Lemma \ref{irreduci0}(i)).\\
In Section \ref{4}, we investigate the stochastic counterpart of $(P_t)_{t>0}$. Adding just assumption (b) to assumption (a) suffices to obtain that $(P_t)_{t>0}$  is the transition function of a Hunt process $\M$ and to carry over most of the probabilistic results from \cite{LT18} to the more general situation considered here (see Remark \ref{mostresults} and Theorem \ref{weakexistence} which states  that $\M$ solves weakly the stochastic differential equation with coefficients given by $L$). In Theorem \ref{supestimate}, we present a new non-explosion condition, which leads to a moment  inequality. It also allows for $L^q(\R^d,m)$-singularities outside an arbitrarily large compact set and linear growth of the drift at the same time. An application of  Theorem \ref{supestimate} is illustrated in the Example \ref{infsin}. In Section \ref{4.2}, we discuss the relation of $L^1(\R^d,m)$-uniqueness from  \cite{St99}, the strong Feller property derived here and uniqueness in law. More precisely, we obtain a result on uniqueness in law among all right processes that have $m$ as sub-invariant measure (see Propositions \ref{wilhelm} and \ref{uniquenesslawgen}). \\ 
Finally, we would like to discuss a special aspect of our work, which we think  is remarkable and to relate our work to some other references. The Hunt process $\M$ which is constructed in this article satisfies the following Krylov type estimate: let $g\in L^r(\R^d,m)$ for some $r\in [q,\infty]$. Then for any Euclidean ball $B$ there exists a constant $c_{B,r,t}$, depending in particular on $B$, $t$, and $r$, but not on $g\in L^r(\R^d, m)$, $g\ge0$, such that for all $t \ge  0$
\begin{equation} \label{resest}
\sup_{x\in \overline{B}}\E_x\left [ \int_0^t g(X_s) \, ds \right ] < c_{B,r,t}\, \|g\|_{L^r(\R^d, m)}.
\end{equation}
Using Theorem \ref{4.1} below, \eqref{resest} can be shown exactly as in \cite[Lemma 3.14(ii)]{LT18}. Such type of estimate is an important tool for the analysis of diffusions (see for instance \cite{Kry} and in particular \cite[p.54, 4. Theorem]{Kry} for the original estimate involving conditional expectation, or also \cite{GyMa} and \cite{Zh11}). A priori  \eqref{resest} only holds for the Hunt process $\M$ constructed here. However, if pathwise uniqueness holds (for instance if the coefficients here are locally Lipschitz or under the conditions in \cite{Zh11}), or more generally uniqueness in law holds for the SDE solved by $\M$ with certain given coefficients, then \eqref{resest} holds generally for any diffusion with the given coefficients. If further $g\in L^r(\R^d)$ has compact support, then $\|g\|_{L^r(\R^d, m)}$ in \eqref{resest} can be replaced by $\|g\|_{L^r(\R^d)}$, when $c_{B,r,t}$ is replaced by a constant $c_{B,r,t,\rho}$ that also depends on the values of $\rho$ on the support of $g$. If $A$, $C$, $\rho$, $\widetilde{\mathbf{B}}$ are explicitly given, as described in Remark \ref{expg}(i), i.e. the case where the generalized Dirichlet form is explicitly given as in \cite{St99}, then \eqref{resest} holds with explicit $\rho$ and \eqref{resest} can be seen as a Krylov type estimate for a large class of time-homogeneous generalized Dirichlet forms. As a particular example consider the non-symmetric divergence form case, i.e. the case where $\mathbf{H}\equiv 0$ in \eqref{defofLdivform}. Then the explicitly given $\rho\equiv 1$ defines an infinitesimally invariant measure. Hence $m$ in \eqref{resest} can be replaced by Lebesgue measure in this case. The latter together with some further results of this article complement analytically as well as probabilistically aspects of the works \cite{Str88},  \cite{Roz96}, and \cite{TaTr} where also divergence form operators are treated, but where more emphasis is put on the mere measurability of the diffusion matrix and not on the generality of the drift.

\section{Terminologies and notations}
For a  matrix $A$, let $A^{T}$ denote the transposed matrix of $A$. If $A=(a_{ij})_{1\le i,j\le d}$ consists of weakly differentiable functions $a_{ij}$, we define
$$
\nabla A=((\nabla A)_1,\ldots , (\nabla A)_d),\qquad (\nabla A)_i:=\sum_{j=1}^{d} \partial_{j} a_{ij},\quad  1 \leq i \leq d.
$$
If $f$ is two times weakly differentiable, let $\nabla^2 f$ denote the Hessian matrix of second order weak partial derivatives of $f$. In particular
$$
\text{trace}(A\nabla^2 f)=\sum_{i,j=1}^{d} a_{ij}\partial_i \partial_j f.
$$
If $\rho$ is weakly differentiable and a.e. positive then
$$
\beta^{\rho, A}=(\beta_1^{\rho, A}, \ldots, \beta_d^{\rho, A}):= \frac{1}{2}\Big(\nabla A+\frac{A\nabla \rho}{\rho} \Big),
$$
is called the logarithmic derivative of $\rho$ associated with $A$. Hence
$$
\beta^{\rho,A^T}_i=\frac12\sum_{j=1}^{d}\Big (\partial_j a_{ji}+a_{ji}  \frac{\partial_j \rho}{\rho}\Big), \quad 1\le i\le d.
$$
For a bounded open subset $U$ of $\R^d$ and a possibly non-symmetric matrix of measurable functions $A=(a_{ij})_{1 \leq i,j \leq d}$ on $U$, we say that $A$ is {\it uniformly strictly elliptic and bounded} on $U$, if there exist $\lambda>0$ and $M>0$ such that for any $\xi=(\xi_1, \dots, \xi_d) \in \R^d$, $x\in U$,
$$
\sum_{i,j=1}^{d} a_{ij}(x) \xi_i \xi_j \geq \lambda \|\xi\|^2, \qquad \max_{1 \leq i, j \leq d} | a_{ij}(x)   | \leq M.
$$
In that case, $\lambda$ is called the ellipticity constant and $M$ is called the upper bound constant of $A$.
For other definitions or notations that might be unclear, we refer to \cite{LT18}.
\section{Analytic results}
\subsection{Elliptic  $H^{1,p}$-regularity and $H^{1,p}$-estimates}
The $VMO(\R^d)$ space is defined as the space of all locally integrable functions $f$ on $\R^d$ for which there exists a positive continuous function $\gamma$ on $[0, \infty)$ with $\gamma(0)=0$, such that 
\begin{equation} \label{defVMO}
\sup_{z \in \R^d, r <R} r^{-2d} \int_{B_{r}(z) \times B_{r}(z)} |f(x)-f(y)| dx dy \leq \gamma(R),\;\; \forall R>0.
\end{equation}
If $f$ is uniformly continuous on $\R^d$, we can define 
$$
\gamma(r):=\displaystyle \Big(\int_{B_1} 1\,dx\Big)^{-2} \cdot \displaystyle \sup_{|x-y|<2r, x,y \in \R^d} |f(x)-f(y)|, \quad\gamma(0):=0.
$$ 
Then  $\gamma$ is continuous on $[0, \infty)$ and \eqref{defVMO} holds, hence $f \in VMO(\R^d)$. For a bounded open subset $U$ of $\R^d$ and a function $g$ on $U$, we call $g \in VMO(U)$ if $g$ extends to a function on $\R^d$, again called $g$, such that $g \in VMO(\R^d)$.\\ \\
For measurable functions $a_{ij}$, $b_i$, $\beta_i$, $c$ on $\R^d$, $1 \leq i, j \leq d$, let $A:=(a_{ij})_{1 \leq i,j \leq d}$, $b:=(b_1, \dots, b_d)$, $\beta:=(\beta_1, \dots, \beta_d)$.  
Consider the divergence form operator $\mathcal{L}$, defined in distribution sense
$$
-\mathcal{L}u := -\left(\sum_{i,j=1}^{d} \partial_i(a_{ij} \partial_j u )  +\sum_{i=1}^{d} \partial_i( b_i u) \right) + \sum_{i=1}^{d}\beta_i \partial_i u + cu, \quad u\in C^{\infty}_0(\R^d).
$$
The following theorem is a simple generalization of (1.2.3) in  \cite[Theorem 1.2.1]{BKRS}, where only symmetric matrices of functions are considered.
\begin{theo}[Krylov 2007]\label{1.1}
Consider a possibly non-symmetric matrix of functions $A=(a_{ij})_{1\le i,j\le d}$ and suppose that $a_{ij} \in VMO(\R^d)$, $1\le i,j \le d$, and that there exist $\varepsilon, K>0$ such that 
\begin{align*}
&\qquad \qquad \sum_{i,j=1}^{d} a_{ij}(x) \xi_i \xi_j \geq \varepsilon \|\xi\|^2_{\R^d} \text{ for all } \xi \in \R^d, \; \text{ a.e. } x \in \R^d,\\
& \sum_{i,j=1}^{d} \|a_{ij} \|_{L^\infty(\R^d)}+\sum_{i=1}^{d} \|b_i\|_{L^\infty(\R^d)}+\sum_{i=1}^{d} \|\beta_i\|_{L^\infty(\R^d)}+\|c\|_{L^\infty(\R^d)}  \leq K.
\end{align*}
Then, for every $p \in (1, \infty)$, there are numbers $\lambda_0$ and $M$ depending only $p,d, K, \varepsilon$ and a common $\gamma$ that ensures the $VMO(\R^d)$ condition \eqref{defVMO} simultaneously for all $a_{ij}$, $1 \leq i,j \leq d$, such that for all $\lambda \geq \lambda_0$, $v \in H^{1,p}(\R^d)$, we have
$$
\|v\|_{H^{1,p}(\R^d)} \leq  M  \|\mathcal{L}v - \lambda v  \|_{H^{-1,p}(\R^d)}.
$$
\end{theo}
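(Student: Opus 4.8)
The plan is to deduce the non-symmetric statement from the symmetric case of \cite[Theorem 1.2.1]{BKRS} by inspecting its proof rather than by using it as a black box. Decompose $A = A^{s} + A^{a}$ with $A^{s} := \tfrac12(A + A^{T})$ symmetric and $A^{a} := \tfrac12(A - A^{T})$ antisymmetric. Since $\sum_{i,j=1}^{d} a_{ij}(x)\xi_i\xi_j = \sum_{i,j=1}^{d} a^{s}_{ij}(x)\xi_i\xi_j$ for every $\xi\in\R^d$, the ellipticity constant $\varepsilon$ is inherited by $A^{s}$; moreover $a^{s}_{ij}, a^{a}_{ij} \in VMO(\R^d)$ with $\|a^{s}_{ij}\|_{L^\infty(\R^d)}, \|a^{a}_{ij}\|_{L^\infty(\R^d)} \le K$ and with $VMO$-moduli dominated by a common $\gamma$ built from the one for $A$, so that the constants produced below still depend only on $p,d,K,\varepsilon,\gamma$. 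It is worth flagging at the outset why a soft reduction fails: one cannot simply apply the symmetric result to the operator with leading part $A^{s}$ and move $\mathrm{div}(A^{a}\nabla\,\cdot\,)$ to the right-hand side, because that term has $H^{-1,p}(\R^d)$-norm bounded by $C\,K\,\|v\|_{H^{1,p}(\R^d)}$ with $K$ not necessarily small, hence not absorbable. So one has to re-examine the argument.

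Recall that the proof of \cite[Theorem 1.2.1]{BKRS} rests on three ingredients: (i) the a priori estimate for the frozen constant-coefficient operator $v \mapsto \sum_{i,j=1}^{d} a_{ij}(x_0)\partial_i\partial_j v$, obtained from the Fourier transform and the Mikhlin--H\"ormander multiplier theorem applied to the symbol $-\sum_{i,j} a_{ij}(x_0)\xi_i\xi_j - \lambda$; (ii) a partition-of-unity / coefficient-freezing argument in which, on each small ball, the error operator $v \mapsto \sum_{i,j=1}^{d}\partial_i\big((a_{ij}(\cdot) - a_{ij}(x_0))\partial_j v\big)$ is estimated in $H^{-1,p}(\R^d)$ by $L^p$-commutator (Calder\'on--Zygmund) theory in terms of the $BMO$-seminorm of $A$ on that ball, which is small by the $VMO$ property; and (iii) absorption of the lower-order terms $\sum_i\partial_i(b_i\,\cdot\,)$, $\sum_i\beta_i\partial_i$ and $c$ into the right-hand side at the price of enlarging $\lambda_0$, using $\|b_i\|_{L^\infty(\R^d)}, \|\beta_i\|_{L^\infty(\R^d)}, \|c\|_{L^\infty(\R^d)} \le K$ together with interpolation.

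The key point is that none of (i)--(iii) uses symmetry of $A$. For (i), the constant matrix $A(x_0)$ satisfies $\sum_{i,j} a_{ij}(x_0)\partial_i\partial_j v = \sum_{i,j} a^{s}_{ij}(x_0)\partial_i\partial_j v$ and the symbol $\sum_{i,j} a_{ij}(x_0)\xi_i\xi_j + \lambda = \sum_{i,j} a^{s}_{ij}(x_0)\xi_i\xi_j + \lambda \ge \varepsilon\|\xi\|^2 + \lambda$, so the model problem is literally the symmetric one and the multiplier bounds depend only on $\varepsilon$ and $\max_{i,j}|a_{ij}(x_0)| \le K$; for (iii) the lower-order terms do not involve $A$; and for (ii) the commutator estimate for $\sum_{i,j}\partial_i\big((a_{ij}-a_{ij}(x_0))\partial_j v\big)$ is insensitive to symmetry of $(a_{ij})$, the relevant quantity being the $BMO$-smallness of the individual entries, which holds for $A$ (hence for $A^{s}$ and $A^{a}$) with modulus $\gamma$. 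Running the argument of \cite[Theorem 1.2.1]{BKRS} verbatim with these observations then produces numbers $\lambda_0$ and $M$ depending only on $p,d,K,\varepsilon$ and $\gamma$ such that $\|v\|_{H^{1,p}(\R^d)} \le M\,\|\mathcal{L}v - \lambda v\|_{H^{-1,p}(\R^d)}$ for all $\lambda \ge \lambda_0$ and $v \in H^{1,p}(\R^d)$. The only real work is bookkeeping: checking that every constant in the argument can be taken independent of the antisymmetric part of $A$ beyond its $L^\infty$-bound — which, since Krylov's original treatment already allows non-symmetric leading coefficients, is essentially automatic. The main obstacle to keep in mind is exactly the one flagged above: the antisymmetric part is a genuine second-order-form perturbation, not a lower-order one, so the symmetry-free character of the proof must be verified directly rather than circumvented.
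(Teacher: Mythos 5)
Your argument is essentially sound in spirit, but it takes a genuinely different route from the paper. You propose to open up the proof of the symmetric statement (1.2.3) in \cite[Theorem 1.2.1]{BKRS} and verify step by step (frozen-coefficient multiplier estimate, VMO/commutator perturbation, absorption of lower-order terms for large $\lambda$) that symmetry of $A$ is never used; your preliminary observation that the antisymmetric part cannot simply be moved to the right-hand side, since $\mathrm{div}(A^{a}\nabla\,\cdot\,)$ is a non-small second-order perturbation, is correct and worth making. The paper instead avoids any inspection of the VMO machinery: it quotes \cite[Theorem 2.8]{Kr07}, which is already stated for possibly non-symmetric $VMO$ matrices, and reduces the $H^{-1,p}$ formulation to it by writing $\mathcal{L}v-\lambda v = f + \mathrm{div}\,g$ with $\|f\|_{L^p}+\sum_i\|g_i\|_{L^p}\le (d+1)\|\mathcal{L}v-\lambda v\|_{H^{-1,p}}$ via \cite[Proposition 9.20]{BRE}, then uses the uniqueness part of Krylov's theorem to identify $v$ with the solution for that right-hand side, obtaining the explicit constant $M=N(d+1)$. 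What the paper's route buys is rigor at zero cost: no internal steps of anyone's proof need to be certified symmetry-free, and the dependence of $\lambda_0,M$ on $p,d,K,\varepsilon,\gamma$ is inherited directly. What your route would buy, if carried out in full, is self-containedness, but as written it is a proof by inspection whose key analytic step (ii) is asserted rather than executed; since you yourself note that Krylov's original treatment already allows non-symmetric leading coefficients, the cleaner move is exactly the paper's, namely to cite that result and supply only the $f+\mathrm{div}\,g$ representation of the $H^{-1,p}$ datum.
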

\begin{proof}
Take constants $\lambda_0$, $N$ as in \cite[Theorem 2.8]{Kr07}, which depend only on $p,d, K, \varepsilon$. Let $\lambda\ge\lambda_0$ be given.
By \cite[Proposition 9.20]{BRE}, there exist $f \in L^p(\R^d)$ and $g=(g_1, \dots, g_d)\in L^p(\R^d, \R^d)$ such that
\begin{equation} \label{divform}
\mathcal{L}v - \lambda v = f + {\rm div} g \quad \text{ in } H^{-1,p}(\R^d),
\end{equation}
where
$$
\|\mathcal{L}v - \lambda v  \|_{H^{-1,p}(\R^d)} = \max(\|f\|_{L^p(\R^d)}, \|g_1\|_{L^p(\R^d)}, \dots, \|g_d\|_{L^p(\R^d)}).
$$
Thus 
$$
\displaystyle \|f\|_{L^p(\R^d)} + \sum_{i=1}^{d} \|g_i\|_{L^p(\R^d)} \leq (d+1) \|\mathcal{L}v - \lambda v  \|_{H^{-1,p}(\R^d)}.
$$
By \cite[Theorem 2.8]{Kr07} $v$ is the unique solution to \eqref{divform} and
\begin{eqnarray*}
\|v\|_{H^{1,p}(\R^d)} &\leq& N \left(\|f\|_{L^p(\R^d) } + \sum_{i=1}^{d} \|g_i\|_{L^p(\R^d)}   \right) \\
&\leq& \underbrace{N(d+1)}_{=:M} \|\mathcal{L}v - \lambda v  \|_{H^{-1,p}(\R^d)}.
\end{eqnarray*}
\end{proof}\\
We shall make a general remark concerning the monograph \cite{BKRS}.
\begin{rem}\label{nonsym}
In what follows, we shall use in particular the statements 1.7.4, 1.7.6, 1.8.3,  2.1.4, 2.1.6, 2.1.8 of \cite{BKRS} which are formulated for a symmetric matrix of functions $A=(a_{ij})_{1\le i,j \le d}$ on a bounded smooth domain $\Omega$, such that each function $a_{ij}$ is $VMO(\Omega)$ and $A$ is uniformly strictly elliptic and bounded on $\Omega$. However, a closer look at the corresponding proofs shows that the symmetry is not a necessary  assumption. More precisely,  (1.7.10) in the proof of \cite[Theorem 1.7.4]{BKRS} follows from (1.2.3) of \cite[Theorem 1.2.1]{BKRS}. But by a result of Krylov the symmetry of $(a_{ij})_{1 \leq i,j \leq d}$ is not essential in Theorem \ref{1.1}.  Consequently, \cite[Corollary 1.7.6]{BKRS}, whose proof is based on  \cite[Theorem 1.7.4]{BKRS}, also holds for a non-symmetric matrix of functions  $(a_{ij})_{1 \leq i, j \leq d}$ which is uniformly strictly elliptic and bounded on $\Omega$. The proof of \cite[Proposition 2.1.4]{BKRS} is based on the Lax-Milgram Theorem which only uses a coercivity assumption that is well-known to extend to a non-symmetric matrix of functions. \cite[Theorem 2.1.8]{BKRS} is taken from \cite{T77}, where not only non-symmetric matrices of functions are permitted but also even more general conditions on the functions $a_{ij}, 1\leq i, j \leq d$. \cite[Corollary 2.1.6]{BKRS} is a consequence of \cite[Corollary 1.7.6 , Proposition 2.1.4 and Theorem 2.1.8]{BKRS}.  Finally, the proof of
\cite[Theorem 1.8.3]{BKRS} follows from \cite[Corollary 1.7.6 and Proposition 2.1.4]{BKRS}.  
Therefore all the above mentioned statements  from \cite{BKRS} extend to a non-symmetric matrix of functions $A=(a_{ij})_{1\le i,j \le d}$, such that each function $a_{ij}$ is $VMO(\Omega)$ and $A$ is uniformly strictly elliptic and bounded on $\Omega$. 
However, we will assume more than $VMO(\Omega)$, more precisely $H_{loc}^{1,2}(\R^d) \cap C(\R^d)$, in  what follows since we need an integration by parts formula.
\end{rem}
The following Lemma \ref{compactness} will be used in the proof of Lemma \ref{subsol} for a compactness argument.
\begin{lem}\label{compactness}
Let $A = (a_{ij})_{1 \leq i,j \leq d}$, $A_n=(a^n_{ij})_{1 \leq i,j \leq d}$  be uniformly strictly elliptic and bounded on an open ball $B$, satisfying $a^{n}_{ij} \to a_{ij}$ in $L^2(B)$ as $n \to \infty$, $1 \leq i,j \leq d$. Moreover, let $A_n$, $n\in \N$, and $A$ have the same ellipticity constant $\lambda_n\equiv \lambda$ and upper bound constant $M_n\equiv M$.
Let for some $p>d$, $b \in L^p(B, \R^d)$, $b_n \in L^p(B, \R^d)$ such that $b_n \to b$ in $L^p(B, \R^d)$ as $n \to \infty$. Given $F \in L^2(B,\R^d)$, suppose that $u_{n,F} \in H_0^{1,2}(B)$ satisfies
$$
\int_{B} \langle A_n \nabla u_{n,F}+b_n u_{n,F}, \nabla \varphi \rangle \,dx = \int_B \langle F, \nabla \varphi  \rangle \,dx, \quad \text{ for every } \varphi \in C_0^{\infty}(B).
$$
Then
$$
\|u_{n,F}\|_{L^2(B)} \leq C \|F\|_{L^2(B, \R^d)},
$$
where $C>0$ is a constant which is independent of $n$ and $F$.
\end{lem}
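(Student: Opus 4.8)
The plan is to argue by contradiction via a compactness argument, the decisive ingredient being the uniqueness of the homogeneous Dirichlet problem for divergence form operators with a subcritical drift. Suppose the asserted estimate fails for every constant. Then there are $n_k\in\N$ and $F_k\in L^2(B,\R^d)\setminus\{0\}$ with $\|u_{n_k,F_k}\|_{L^2(B)}>k\,\|F_k\|_{L^2(B,\R^d)}$. Put $v_k:=u_{n_k,F_k}/\|u_{n_k,F_k}\|_{L^2(B)}$ and $G_k:=F_k/\|u_{n_k,F_k}\|_{L^2(B)}$, so that $\|v_k\|_{L^2(B)}=1$, $\|G_k\|_{L^2(B,\R^d)}<1/k$, and, by linearity of the hypothesis, $v_k\in H_0^{1,2}(B)$ solves
$$
\int_B\langle A_{n_k}\nabla v_k+b_{n_k}v_k,\nabla\varphi\rangle\,dx=\int_B\langle G_k,\nabla\varphi\rangle\,dx,\qquad\varphi\in C_0^\infty(B).
$$
Since $p>d$, one has $H_0^{1,2}(B)\hookrightarrow L^{2p/(p-2)}(B)$, hence $b_{n_k}v_k\in L^2(B)$ and the identity extends by density to all $\varphi\in H_0^{1,2}(B)$.

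First I would derive a uniform $H_0^{1,2}(B)$-bound for $(v_k)$. Testing with $\varphi=v_k$ and using the common ellipticity constant $\lambda$ gives $\lambda\|\nabla v_k\|_{L^2(B)}^2\le\|G_k\|_{L^2(B,\R^d)}\|\nabla v_k\|_{L^2(B)}+\|b_{n_k}\|_{L^p(B)}\|v_k\|_{L^r(B)}\|\nabla v_k\|_{L^2(B)}$ with $r:=(\tfrac12-\tfrac1p)^{-1}$. Since $p>d$, the embedding $H_0^{1,2}(B)\hookrightarrow L^r(B)$ holds and the Gagliardo--Nirenberg exponent $\theta:=d/p$ in $\|v_k\|_{L^r(B)}\le c\,\|\nabla v_k\|_{L^2(B)}^{\theta}\|v_k\|_{L^2(B)}^{1-\theta}$ is strictly less than $1$. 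Using this, the normalisation $\|v_k\|_{L^2(B)}=1$, the uniform bound $\sup_k\|b_{n_k}\|_{L^p(B)}<\infty$ (recall $b_n\to b$ in $L^p(B,\R^d)$), and Young's inequality to absorb the gradient terms, one arrives at $\sup_k\|\nabla v_k\|_{L^2(B)}<\infty$.

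By the Rellich--Kondrachov theorem I would then extract a subsequence, not relabelled, with $v_k\to v$ in $L^2(B)$ and $v_k\rightharpoonup v$ in $H_0^{1,2}(B)$; in particular $\|v\|_{L^2(B)}=1$, so $v\ne0$. To pass to the limit in the equation, fix $\varphi\in C_0^\infty(B)$: from $a^{n_k}_{ij}\to a_{ij}$ in $L^2(B)$ and boundedness of $\nabla\varphi$ one gets $A_{n_k}^{T}\nabla\varphi\to A^{T}\nabla\varphi$ in $L^2(B,\R^d)$, which together with $\nabla v_k\rightharpoonup\nabla v$ yields $\int_B\langle A_{n_k}\nabla v_k,\nabla\varphi\rangle\,dx\to\int_B\langle A\nabla v,\nabla\varphi\rangle\,dx$; from $b_{n_k}\to b$ in $L^p(B,\R^d)$ and $v_k\to v$ in $L^2(B)$ one gets $\int_B\langle b_{n_k}v_k,\nabla\varphi\rangle\,dx\to\int_B\langle bv,\nabla\varphi\rangle\,dx$; and $\int_B\langle G_k,\nabla\varphi\rangle\,dx\to0$. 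Hence $v\in H_0^{1,2}(B)$ satisfies $\int_B\langle A\nabla v+bv,\nabla\varphi\rangle\,dx=0$ for all $\varphi\in C_0^\infty(B)$, with $A$ measurable, uniformly strictly elliptic and bounded on $B$ with constants $\lambda,M$, and $b\in L^p(B,\R^d)$, $p>d$.

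Finally I would invoke uniqueness: such a $v$ must vanish identically, contradicting $\|v\|_{L^2(B)}=1$ and proving the lemma. I expect this last step to be the main point requiring an external result. The operator $v\mapsto\mathrm{div}(bv)$ is a compact perturbation of the Lax--Milgram isomorphism $v\mapsto\mathrm{div}(A\nabla v)$ on $H_0^{1,2}(B)$, so by the Fredholm alternative unique solvability of the homogeneous Dirichlet problem reduces to triviality of its kernel, and the latter follows—precisely because $p>d$ is subcritical—from the elliptic $L^p$-theory and the (Harnack/Hopf type) maximum principle for divergence form operators with measurable non-symmetric principal part and $L^p$-drift; see \cite[Theorem~2.1.8]{BKRS} together with Remark~\ref{nonsym} (or the classical De~Giorgi--Nash--Moser theory). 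The remaining steps are routine bookkeeping, the recurring point being that the strict inequality $p>d$ keeps $b_{n_k}v_k$ in $L^2(B)$, keeps the Gagliardo--Nirenberg exponent below $1$ so that Young's inequality produces a genuinely lower-order term, and makes the limiting homogeneous problem uniquely solvable.
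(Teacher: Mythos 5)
Your proposal is correct and follows essentially the same route as the paper: argue by contradiction, normalise to get $\|v_k\|_{L^2}=1$ with data of norm $<1/k$, obtain a uniform $H_0^{1,2}(B)$-bound, extract a subsequence via Rellich--Kondrachov, pass to the limit in the weak formulation, and conclude $v=0$ from the uniqueness statement \cite[Theorem 2.1.8]{BKRS} together with Remark \ref{nonsym}, contradicting $\|v\|_{L^2}=1$. The only cosmetic differences are that the paper obtains the a priori $H_0^{1,2}$-bound by citing Stampacchia \cite[Th\'eor\`eme 3.2]{St65} rather than by your direct energy/Gagliardo--Nirenberg estimate, and that it identifies the normalised function with $u_{n_k,F_k}$ via uniqueness instead of invoking linearity.
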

\begin{proof}
Assume that the assertion does not hold, i.e. given $k \in \N$ there exist $\widetilde{F}_k \in L^2(B, \R^d)$  and $n_k \in \N$ such that
$$
\|u_{n_k,\widetilde{F}_k}\|_{L^2(B)} > k \|\widetilde{F}_k\|_{L^2(B, \R^d)}.
$$
Define $F_k:=\displaystyle \frac{ \widetilde{F}_k}{ \|u_{n_k,\widetilde{F}_k}\|_{L^2(B)}  }$. By \cite[Proposition 2.1.4, Theorem 2.1.8]{BKRS} and Remark \ref{nonsym}, we get $u_{n_k, F_k}=\displaystyle \frac{ u_{n_k,\widetilde{F}_k}}{ \|u_{n_k,\widetilde{F}_k}\|_{L^2(B)}  }$. Thus we have
$$
\|u_{n_k,F_k}\|_{L^2(B)}=1\;\; \text{ and } \quad\|F_k\|_{L^2(B, \R^d)} < \frac{1}{k}.
$$
By \cite[Th\'eor\`eme 3.2]{St65},
\begin{eqnarray*}
\|u_{n_k, F_k}\|_{H_0^{1,2}(B)} &\leq& C_1(\|u_{n_k, F_k}\|_{L^2(B)} + \|F_k\|_{L^2(B, \R^d)})\ \leq \ 2C_1,
\end{eqnarray*}
where $C_1$ is independent of $k$.
By the weak compactness of balls in $H_0^{1,2}(B)$ and the Rellich-Kondrachov Theorem, 
there exist a subsequence $( u_{n_{k_j}, F_{k_j}} )_j \subset (u_{n_k, F_{k}})_k$ and $u \in H_0^{1,2}(B)$ such that
$$
u_{n_{k_j} , F_{k_j}} \rightarrow  u \ \text{ weakly in } H_0^{1,2}(B), \quad u_{n_{k_j}, F_{k_j}} \rightarrow u \ \text{ in } L^2(B).
$$
In particular, $\|u\|_{L^2(B)}=1$ and using the assumption, we can see that $u$ satisfies
$$
\int_{B} \langle A\nabla u+b u, \,\nabla \varphi \rangle dx = 0, \quad \text{ for every } \varphi \in C_0^{\infty}(B).
$$
By \cite[Theorem 2.1.8]{BKRS} and Remark \ref{nonsym}, we have $u=0$ a.e. on $B$, which is a contradiction. Therefore the assertion must hold.
\end{proof}\\
The following is well known in the case where $b\equiv 0$ (see for instance \cite[Lemma 4.6]{HanLin}). 
\begin{lem}\label{subsol}
Let $U$ be a bounded open set with Lipschitz boundary. Let $A = (a_{ij})_{1 \leq i,j \leq d}$ be uniformly strictly elliptic and bounded on $U$, with ellipticity constant $\lambda$ and upper bound constant $M$. Let for some $p>d$, $b \in L^p(U, \R^d)$ and assume that $u \in H^{1,2}(U)$ satisfies
$$
\int_{U} \langle A \nabla u+bu, \nabla \varphi \rangle dx \leq 0, \quad \text{ for every } \varphi \in C_0^{\infty}(U),\; \varphi \geq 0.
$$
Then we have
$$
\int_{U} \langle A \nabla u^{+}+bu^{+}, \nabla \varphi \rangle dx \leq 0, \quad \text{ for every } \varphi \in C_0^{\infty}(U),\; \varphi \geq 0.
$$
\end{lem}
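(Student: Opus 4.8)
The plan is to prove Lemma \ref{subsol} by a standard truncation-and-regularization argument, but with the key twist that the lower-order term $bu$ must be handled via the $L^2$-estimate from Lemma \ref{compactness} rather than by an elementary manipulation (which is why $b\equiv 0$ is the classical case). First I would introduce, for $\varepsilon>0$, the smoothed positive-part function $G_\varepsilon\in C^1(\mathbb{R})$ with $G_\varepsilon(t)=0$ for $t\le 0$, $G_\varepsilon$ convex, $0\le G_\varepsilon'\le 1$, $G_\varepsilon'$ bounded, and $G_\varepsilon(t)\uparrow t^+$, $G_\varepsilon'(t)\uparrow \mathbbm{1}_{\{t>0\}}$ pointwise as $\varepsilon\downarrow 0$; a convenient choice is $G_\varepsilon(t)=\sqrt{t^2+\varepsilon^2}-\varepsilon$ for $t>0$ and $0$ otherwise, or a quadratic-then-linear spline. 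Then $G_\varepsilon(u)\in H^{1,2}(U)$ with $\nabla G_\varepsilon(u)=G_\varepsilon'(u)\nabla u$.

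The main difficulty is that one cannot simply plug $\varphi\, G_\varepsilon'(u)$ into the hypothesis, since $G_\varepsilon'(u)$ need not be in $C_0^\infty(U)$ and, more seriously, because the inequality is only valid against nonnegative test functions and the chain-rule manipulation of $\langle A\nabla u, \nabla(\varphi G_\varepsilon'(u))\rangle$ produces a term involving $\nabla(G_\varepsilon'(u))=G_\varepsilon''(u)\nabla u$ whose sign we need to control; here convexity of $G_\varepsilon$ gives $G_\varepsilon''\ge 0$, and ellipticity of $A$ gives $\langle A\nabla u,\nabla u\rangle\ge 0$, so that the extra term has a favorable sign and can be dropped from an inequality. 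Concretely, I would first regularize the problem: let $A_n$ be smooth mollifications of $A$ (kept uniformly elliptic and bounded with the same constants $\lambda, M$, using assumption (a) / the local boundedness of $A$), $b_n$ smooth mollifications of $b$ converging in $L^p(U',\mathbb{R}^d)$ on relatively compact $U'\Subset U$, and $u_n$ the smooth solution of the mollified equation with the same right-hand side, so $u_n\to u$ in $L^2$ via Lemma \ref{compactness} (applied after writing the inequality with a nonnegative defect measure, i.e. $\langle A\nabla u+bu,\nabla\varphi\rangle = -\int \varphi\, d\mu$ for a nonnegative Radon measure $\mu$, and reproducing $\mu$ in the approximations). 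For the smooth approximants the chain rule is classical: testing the mollified equation against $\varphi\, G_\varepsilon'(u_n)$ with $\varphi\in C_0^\infty(U)$, $\varphi\ge 0$, and using $G_\varepsilon''\ge 0$ together with ellipticity to discard the nonnegative quadratic term yields
$$
\int_U \langle A_n\nabla(G_\varepsilon(u_n)) + b_n G_\varepsilon'(u_n) u_n,\ \nabla\varphi\rangle\, dx \le \int_U \varphi\, d\mu_n \cdot(\text{sign term})\le 0 \quad \text{(after regularization)}.
$$

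Then I would pass to the limit in two stages. First, with $\varepsilon$ fixed, send $n\to\infty$: $A_n\to A$ in $L^2(U')$, $b_n\to b$ in $L^p(U')$, $G_\varepsilon(u_n)\to G_\varepsilon(u)$ and $G_\varepsilon'(u_n)u_n\to G_\varepsilon'(u)u$ in $L^2(U')$ (by the Lipschitz bounds on $G_\varepsilon$, $G_\varepsilon'$ and $u_n\to u$ in $L^2$, plus uniform $H^{1,2}$ bounds from St65 giving weak convergence of gradients), and $\mu_n\to\mu$ weakly, to obtain $\int_U \langle A\nabla(G_\varepsilon(u)) + b\, G_\varepsilon'(u)u,\ \nabla\varphi\rangle\, dx\le 0$ for all $\varphi\in C_0^\infty(U)$, $\varphi\ge 0$. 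Second, send $\varepsilon\downarrow 0$: $G_\varepsilon'(u)\to \mathbbm{1}_{\{u>0\}}$ boundedly and a.e., hence $G_\varepsilon'(u)u\to u^+$ in $L^2$, while $\nabla(G_\varepsilon(u))=G_\varepsilon'(u)\nabla u\to \mathbbm{1}_{\{u>0\}}\nabla u=\nabla u^+$ in $L^2(U)$ by dominated convergence (using $|\nabla u|\in L^2$), so the desired inequality
$$
\int_U \langle A\nabla u^+ + b\, u^+,\ \nabla\varphi\rangle\, dx\le 0,\qquad \varphi\in C_0^\infty(U),\ \varphi\ge 0,
$$
follows. The Lipschitz-boundary hypothesis on $U$ enters only to guarantee that the relevant Sobolev and Rellich–Kondrachov statements (already invoked through Lemma \ref{compactness} and St65) apply up to the boundary and that $u^+\in H^{1,2}(U)$ is the genuine positive part. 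I expect the bookkeeping of the defect measure $\mu$ through the mollification — ensuring $\mu_n\ge 0$ and $\mu_n\to\mu$ so that the sign is preserved in the limit — to be the one genuinely delicate point; everything else is the classical De Giorgi–Stampacchia truncation scheme.
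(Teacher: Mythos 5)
Your architecture is essentially the paper's: extend $u$ and the coefficients, mollify $A$, $b$ and the flux $F:=A\nabla u+bu$ (the paper does this directly rather than through a defect measure $\mu$, which makes the preservation of the sign trivial, since $\int_U\langle F_n,\nabla\varphi\rangle\,dx=\int_U\langle F,\nabla(\varphi*\eta_{1/n})\rangle\,dx\le 0$ for $\varphi\in C_0^\infty(V)$, $V\Subset U$, and $n$ large), solve the auxiliary smooth Dirichlet problems to get $u_n$, identify the limit as $u$ via Lemma \ref{compactness}, Stampacchia's estimate and the maximum principle, and then run the convex smoothed truncation. So far this matches the paper step for step.

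There is, however, one genuine gap, and it sits exactly at the point where $b\not\equiv 0$ makes the lemma non-classical. Your displayed intermediate inequality
$$
\int_U \bigl\langle A_n\nabla(G_\varepsilon(u_n)) + b_n\,G_\varepsilon'(u_n)\,u_n,\ \nabla\varphi\bigr\rangle\,dx \le 0
$$
is not what the chain rule gives. Testing the smooth equation against $\varphi\,G_\varepsilon'(u_n)$ produces, besides the favorable term $\int\varphi\,G_\varepsilon''(u_n)\langle A_n\nabla u_n,\nabla u_n\rangle\,dx\ge 0$ that you correctly discard, the cross term $\int\varphi\,u_n\,G_\varepsilon''(u_n)\langle b_n,\nabla u_n\rangle\,dx$, which has no sign; equivalently, if one writes the drift as $b_nG_\varepsilon(u_n)$ (the paper's choice), the leftover is $\int ({\rm div}\,b_n)\bigl(G_\varepsilon(u_n)-u_nG_\varepsilon'(u_n)\bigr)\varphi\,dx$. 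Convexity and ellipticity do nothing to this term; it must be carried along and shown to vanish, and the natural way to do so is to send $\varepsilon\to 0$ \emph{first, with $n$ fixed}: then $zG_\varepsilon''(z)\to 0$ (resp.\ $G_\varepsilon(z)-zG_\varepsilon'(z)\to 0$) boundedly and pointwise, and dominated convergence applies because $|b_n||\nabla u_n|\varphi$ (resp.\ $|{\rm div}\,b_n|\,\varphi$) is integrable for fixed $n$. This is the order of limits the paper uses, and it is the opposite of yours: with $\varepsilon$ fixed and $n\to\infty$ the leftover term involves ${\rm div}\,b_n$ (unbounded in every $L^r$ since $b$ is merely $L^p$) or the product of the weakly convergent $\nabla u_n$ with $G_\varepsilon''(u_n)$, so its limit is at best $\int\varphi\,u\,G_\varepsilon''(u)\langle b,\nabla u\rangle\,dx$, which is nonzero for fixed $\varepsilon$ and would still have to be tracked into the second limit. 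The gap is repairable within your strategy, but as written the key inequality is asserted rather than proved, and the delicate point is not the bookkeeping of $\mu$ (which is harmless) but this drift--truncation interaction term.
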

\begin{proof}
Let $B$ be an open ball such that $\overline{U} \subset B$. By \cite[Theorem 4.7]{EG15}, $u \in H^{1,2}(U)$ can be extended to a function $u\in H_0^{1,2}(B)$. And by \cite[Theorem 4.4]{EG15}, $u^{+} \in H_0^{1,2}(B)$ with
$$
\nabla u^+=
\left\{\begin{matrix}
\nabla u \quad \text{ a.e. on } \left\{ u>0  \right\}, \\[5pt]
\; 0 \quad \quad \text{ \;a.e. on } \left\{ u \leq 0  \right\}.
\end{matrix}\right.
$$ 
Given $\varepsilon>0$ define \;
$$
f_{\varepsilon}(z):=
\left\{\begin{matrix}
\;(z^2+\varepsilon^2)^{1/2}- \varepsilon \quad \text{ if } z \geq 0, \\[5pt]
\;\;0 \qquad \qquad  \qquad  \quad \text{ if } z<0.
\end{matrix}\right.
$$
Then $f_{\varepsilon} \in C^1(\R)$, $f_{\varepsilon}' \in  H^{1, \infty}(\R)$, and
$$
f'_{\varepsilon}(z)= 
\left\{\begin{matrix}
\displaystyle \frac{z}{\sqrt{z^2+\varepsilon^2}} \quad \text{ if } z \geq 0, \\[8pt]
0 \quad \quad \qquad  \;\;\text{ if } z<0,
\end{matrix}\right.
\qquad and \qquad 
f''_{\varepsilon}(z)= 
\left\{\begin{matrix}
\displaystyle \frac{\varepsilon^2}{(z^2+\varepsilon^2)^{3/2}} \quad \text{ if } z >0, \\[8pt]
0 \quad \quad \qquad  \quad \;\;\text{ if } z<0.
\end{matrix}\right.
\qquad
$$
Note that $f_{\varepsilon}(z) \longrightarrow z^{+}$, $f'_{\varepsilon}(z) \longrightarrow 1_{(0,\infty)}(z)$ as $\varepsilon \rightarrow 0$ for every $z \in \R$. Extend the matrix of functions $A$ to whole $\R^d$ with same ellipticity constant $\lambda$ and upper bound constant $M$. (This is possible, for instance set $A=\lambda\cdot Id$ on $\R^d\setminus U$ and note that $\lambda\le M$.) Extend $b \in L^p(U, \R^d)$ to $L^p(\R^d, \R^d)$ by setting $b$ zero outside $U$. Define $F:=A \nabla u+bu \in L^2(\R^d, \R^d)$. For $n\in \N$ let $\eta_{\frac{1}{n}} \in C_0^{\infty}(B_{\frac{1}{n}})$ be defined as usually through the standard mollifier and let $a^n_{ij}:= a_{ij} * \eta_{\frac{1}{n}}$, $A_n:=(a^n_{ij})_{1 \leq i, j \leq d}$,  $b_n:= b* \eta_{\frac{1}{n}}$, $F_n:= F * \eta_{\frac{1}{n}}$ on $\R^d$. Then $a^n_{ij }\in C^{\infty}(\overline{B})$, $b_n, F_n \in C^{\infty}(\overline{B}, \R^d)$ satisfy
\begin{equation} \label{stcon}
a^n_{ij} \longrightarrow a_{ij}, \text{ in } \; L^2(B), \;   \quad b_n \longrightarrow b\; \text{ in } \; L^p(B, \R^d), \quad F_n \longrightarrow F \; \text{ in } \; L^2(B, \R^d).
\end{equation}
Moreover, each $A_n$, $n\in \N$, is uniformly strictly elliptic and bounded on $B$ with same elliptic constant $\lambda$ and upper bound constant $M$ as $A$. Let $V$ be a fixed open set with $\overline{V} \subset U$. Choose $\delta>0$ with $\overline{B}_{\delta}(z) \subset U$ for all $z \in V$ and take $N \in \N$ with $\frac{1}{N} < \delta$. Then by the assumption, for any $n \geq N$ and $\varphi \in C_0^{\infty}(V)$ with $\varphi \geq 0$
\begin{eqnarray}\label{ineq1}
\int_{U} \langle F_n, \nabla \varphi \rangle dx = \int_{U}  \langle A \nabla u+bu, \nabla (\varphi* \eta_{\frac{1}{n}})  \rangle \,dx \leq 0.
\end{eqnarray}
By \cite[Proposition 2.1.4, Theorem 2.1.8]{BKRS} and Remark \ref{nonsym}, there exists $u_n \in H_0^{1.2}(B)$ such that
\begin{eqnarray}\label{ineq2}
\int_{B} \langle A_n \nabla u_n+b_n u_n, \nabla \widetilde{\varphi} \rangle dx = \int_{B} \langle F_n, \nabla  \widetilde{\varphi} \rangle dx, \quad \text{ for all } \; \widetilde{\varphi} \in C_0^{\infty}(B).
\end{eqnarray}
By \cite[Th\'eor\`eme 3.2]{St65} and Lemma \ref{compactness},
\begin{eqnarray*}
 \|u_n\|_{H_0^{1,2}(B)} &\leq& C_1 \|F_n\|_{L^2(B, \R^d)} \ \leq \ C_1\|F\|_{L^2(B,\R^d)}.
\end{eqnarray*}
where $C_1$ is independent of $n$.
By weak compactness of balls in $H_0^{1,2}(B)$, \cite[Theorem 2.1.8]{BKRS} and Remark \ref{nonsym}, there exists a subsequence $(u_{n_k} )_k \subset (u_{n})_n$, such that
\begin{eqnarray} \label{weak+}
u_{n_k} \rightarrow u \ \text{ and }  \ u^+_{n_k} \rightarrow u^+ \ \text{ weakly in } \ H^{1,2}_0(B). 
\end{eqnarray}
Indeed,  \eqref{weak+} first holds with $u$ replaced by some $\widetilde{u}\in H^{1,2}_0(B)$. Then letting $n\to \infty$ in \eqref{ineq2}  and using the maximum principle \cite[Theorem 4]{T77}, we get $\widetilde{u}=u$. For simplicity, write $(u_n)$ for $(u_{n_k})$. By \cite[Theorem 8.13]{Gilbarg}, we have $u_n \in C^{\infty}(\overline{B})$. Now define
$$
\mathcal{L}_n u_n:=\sum_{i,j=1}^{d}a^n_{ij} \partial_i \partial_j u_n + \langle b_n +\nabla A^T_n, \nabla u_n \rangle + ({\rm div}\, b_n) \cdot u_n
$$
Then for any $n \geq N$ and $\varphi \in C_0^{\infty}(V)$ with $\varphi \geq 0$, we obtain using \eqref{ineq1}, \eqref{ineq2}
\begin{eqnarray*}
-\int_{U}\mathcal{L}_n u_n  \, \varphi \,dx \leq 0.
\end{eqnarray*}
Hence $\mathcal{L}_n u_n(x) \geq 0$ for all $x \in V$, $n \geq N$. 
Define $f_{\varepsilon}^k:=f_{\varepsilon}*\phi_{\frac{1}{k}}$, $k\in \N$, where $\phi_{\frac{1}{k}}\in C_0^{\infty}\big ((-\frac{1}{k},\frac{1}{k})\big )$ is the standard mollifier. Then $(f^k_{\varepsilon})' \geq 0$, $(f^k_{\varepsilon})'' \geq 0$ since $f'_{\varepsilon} \geq 0$, $f''_{\varepsilon} \geq 0$. Moreover, $(f^k_{\varepsilon})'(u_n)\to f'_{\varepsilon}(u_n)$ uniformly on $U$ 
as $k\to \infty$. Then,  for any $n \geq N$ and $\varphi \in C_0^{\infty}(V)$ with $\varphi \geq 0$, we obtain
\begin{eqnarray*}
 &&\int_{U} \langle A_n \nabla f_{\varepsilon}(u_n)+b_n f_{\varepsilon}(u_n), \nabla \varphi \rangle dx \ = \ \lim_{k\to \infty}\int_{U}\langle A_n \nabla f^k_{\varepsilon}(u_n)+b_n f^k_{\varepsilon}(u_n), \nabla \varphi \rangle dx \\
 &=& \lim_{k\to\infty}\Big (-\int_{U} \big ( (f^k_{\varepsilon})'(u_n) \mathcal{L}_n u_n +( f^k_{\varepsilon})''(u_n) \langle A_n \nabla u_n, \nabla u_n \rangle\big ) \cdot \varphi \,dx\Big)\\
 && -\lim_{k\to\infty}\int_{U} {\rm div}\, b_n ( f^k_{\varepsilon} (u_n) - u_n (f^k_{\varepsilon})'(u_n) )  \cdot \varphi \,dx \\
&\leq& - \int_{U} \;{\rm div}\, b_n \big( f_{\varepsilon} (u_n) - u_n f_{\varepsilon}'(u_n) \big)  \varphi dx.
\end{eqnarray*}
Since the latter term converges to zero as $\varepsilon \to 0$, for any $n \geq N$, we obtain
$$
 \int_{U}  \langle A_n \nabla u_n^{+}+b_n u_n^{+}, \nabla \varphi  \rangle dx \leq 0, \quad \forall \varphi \in C_0^{\infty}(V),\; \varphi \geq 0.
$$
Consequently, using  \eqref{stcon}, \eqref{weak+}, we get
$$
 \int_{U} \langle A \nabla u^{+}+b u^{+}, \nabla \varphi \rangle dx \leq 0, \quad \forall \varphi \in C_0^{\infty}(V),\; \varphi \geq 0.
$$
Since $V$ is an arbitrary open set with $\overline{V} \subset U$, the assertion follows.
\end{proof}

\subsection{Existence of an infinitesimally invariant measure and construction of a generalized Dirichlet form}\label{one}
We first start with a remark, that clarifies the relation of the divergence type operator \eqref{defofLdivform} and a fairly general class of non-divergence form operators. Moreover, we give some examples of operators satisfying assumption \text{(a)}.
\begin{rem}\label{ex1}
Note that under assumption \emph{(a)},  $L$ as in \eqref{defofLdivform} writes for $f\in C_0^{\infty}(\R^d)$ as
\begin{eqnarray}\label{BRSgen}
Lf &= & \frac12\emph{div}\big ( (A+C)\nabla f\big )+\langle\mathbf{H}, \nabla f\rangle \nonumber\\
&= & \frac12\emph{trace}\big ( A\nabla^2 f\big )+\big \langle\frac{1}{2}\nabla  (A+C^{T})+ \mathbf{H}, \nabla f\big \rangle. 
\end{eqnarray}
Thus $L$ as in \eqref{defofLdivform}  can also be interpreted as non-divergence form operator and therefore, assumption \emph{(a)} allows to consider two general classes of operators:
\begin{itemize}
\item[(i)] \emph{Divergence type operators as in \eqref{defofLdivform} with symmetric or nonsymmetric matrix and with or without $L_{loc}^p$-drift, according to assumption \text{(a)}}: for instance 
$$
Lf = \frac12 \sum_{i,j=1}^{d} \partial_i((a_{ij}+c_{ij})\partial_j)f+\sum_{i=1}^{d}h_i\partial_i f, \quad f \in C_0^{\infty}(\R^d),
$$
or
$$
Lf = \frac12 \sum_{i,j=1}^{d} \partial_i((a_{ij}+c_{ij})\partial_j)f, \quad f \in C_0^{\infty}(\R^d),
$$
where $a_{ij}$, $c_{ij}$ and $h_i$ satisfy assumption \emph{(a)} and $(c_{ij})_{1\le i,j\le d}\equiv 0$ or not.
\item[(ii)] \emph{Non-divergence type operators with symmetric diffusion matrix and $L_{loc}^p$-drift}: for this, suppose that $a_{ij} \in  H_{loc}^{1,p}(\R^d)\cap C(\R^d)$, $1\le i,j\le d$, for some $p>d$, and that $C\equiv 0$. Set
$$
\mathbf{H}:=\mathbf{\widetilde{H}}-\frac12 \nabla A
$$
for arbitrarily chosen $\mathbf{\widetilde{H}}=(\widetilde{h}_1,\ldots,\widetilde{h}_d)\in  L_{loc}^p(\R^d,\R^d)$. Then assumption \emph{(a)} (and even assumption \emph{(b)}) holds (since $p>q$) and \eqref{defofLdivform} can be rewritten as
\begin{eqnarray}\label{defofLnondivform}
Lf & = & \frac12 \sum_{i,j=1}^{d} a_{ij}\partial_{ij} f+\sum_{i=1}^{d}\widetilde{h}_i\partial_i f, \quad f \in C_0^{\infty}(\R^d).
\end{eqnarray}
This special case covers the assumptions of \cite[Theorem 1]{BRS} (see also \cite[Theorem 2.4.1]{BKRS}). In general, we can consider any non-divergence type operator  as in \eqref{BRSgen}, where $A,C$, and $\mathbf{H}$ satisfy the assumption \emph{(a)}. The latter, together with the class of divergence form opertors considered in (i), is the extend to which we can generalize the assumptions of  \cite[Theorem 1]{BRS}.
\end{itemize}
\end{rem}
{\bf From now on}, we set
$$
\mathbf{G}=(g_1, \ldots, g_d)=\frac{1}{2}\nabla \big (A+C^{T}\big )+ \mathbf{H},
$$ 
where $A$, $C$, and $\mathbf{H}$ are as in assumption (a). Then $L$  as in \eqref{defofLdivform} writes as (cf. Remark \ref{ex1})
\begin{eqnarray}\label{defofL}
Lf & = & \frac12 \sum_{i,j=1}^{d}a_{ij}\partial_i \partial_jf+\sum_{i=1}^{d}g_i\partial_i f, \quad f \in C_0^{\infty}(\R^d),
\end{eqnarray}
where
$$
g_i=\frac12\sum_{j=1}^{d} \partial_{j} (a_{ij}+c_{ji})+h_i,\quad  1 \leq i \leq d.
$$
\begin{theo}[Existence of an infinitesimally invariant measure] \label{eim}
Suppose  assumption \emph{(a)} holds. Then there exists $\rho \in H_{loc}^{1,p}(\R^d) \cap C(\R^d)$ with $\rho(x)>0$ for all $x \in \R^d$ such that
\begin{equation} \label{inv}
\int_{\R^d} L\varphi \, \rho dx=0, \quad \text{ for all } \varphi \in C_0^{\infty}(\R^d).
\end{equation}
\end{theo}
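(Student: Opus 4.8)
\medskip
\noindent\emph{Proof idea.} The plan is to read \eqref{inv} as the weak form of the adjoint (stationary Fokker--Planck) equation and to build $\rho$ by exhausting $\R^d$ with balls, in the spirit of \cite[Theorem~1(i)]{BRS}. First I would note that for $\varphi\in C_0^{\infty}(\R^d)$ and any $u\in H^{1,2}_{loc}(\R^d)\cap L^{\infty}_{loc}(\R^d)$, integration by parts in the divergence form \eqref{defofLdivform} gives
$$
\int_{\R^d}L\varphi\,u\,dx=-\int_{\R^d}\Big\langle\tfrac12(A-C)\nabla u-\mathbf{H}u,\ \nabla\varphi\Big\rangle\,dx,
$$
since $A^{T}=A$ and $C^{T}=-C$ by assumption (a), so that $(A+C)^{T}=A-C$. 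Thus \eqref{inv} is equivalent to $\rho$ being a weak solution of $\text{div}\big(\tfrac12(A-C)\nabla\rho-\mathbf{H}\rho\big)=0$. The point of this reformulation is that the only low-regularity coefficient left is $\mathbf{H}\in L^{p}_{loc}(\R^d,\R^d)$ with $p>d$ — the terms involving $\nabla A$, $\nabla C$ have been absorbed into the divergence structure — while $\tfrac12(A-C)$ is continuous, hence locally $VMO$, and by \eqref{uniformell} locally uniformly strictly elliptic and bounded. Hence on every ball this is exactly an equation of the type handled by 1.7.4, 1.7.6, 1.8.3, 2.1.4, 2.1.6, 2.1.8 of \cite{BKRS}, which by Remark \ref{nonsym} remain valid without symmetry.

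Next I would exhaust $\R^d$ by the balls $B_n:=B_n(0)$, $n\in\N$. On each $B_n$ the (non-symmetric versions of the) cited results of \cite{BKRS}, together with the maximum principle of \cite{T77}, produce a function $\rho_n$ that is strictly positive on $B_n$ and satisfies $\int_{B_n}L\varphi\,\rho_n\,dx=0$ for all $\varphi\in C_0^{\infty}(B_n)$; after multiplying by a positive constant I normalize $\int_{B_1}\rho_n\,dx=1$. Since such $\rho_n$ is locally bounded (De Giorgi--Nash--Moser, resp. the local boundedness assertions in \cite{BKRS}), one has $\mathbf{H}\rho_n\in L^{p}_{loc}(B_n,\R^d)$, and then Theorem \ref{1.1} together with the interior $H^{1,p}$-estimate \cite[Theorem~1.8.3]{BKRS} upgrades $\rho_n$ to $H^{1,p}_{loc}(B_n)\cap C(B_n)$.

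The key step is to make these bounds uniform in $n$ on a fixed ball. Fix $R\in\N$; for $n>R+1$, chaining the Harnack inequality \cite[Corollary~1.7.6]{BKRS} (in the form of Remark \ref{nonsym}) along a finite chain of balls covering $\overline{B_R}$ and using the normalization on $B_1$, I get $c_R\le\rho_n\le C_R$ on $B_R$ with $c_R,C_R>0$ depending only on $d$, $p$, the ellipticity and boundedness constants of $A$ on $B_{R+1}$ and the common $VMO$-modulus of the $a_{ij}$ there, but not on $n$. Inserting $\|\mathbf{H}\rho_n\|_{L^{p}(B_R)}\le C_R\|\mathbf{H}\|_{L^{p}(B_{R+1},\R^d)}$ into the $H^{1,p}$-estimate yields $\sup_{n>R+1}\|\rho_n\|_{H^{1,p}(B_R)}<\infty$. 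Since $p>d$, the embedding $H^{1,p}(B_R)\hookrightarrow C(\overline{B_R})$ is compact and bounded sets in $H^{1,p}(B_R)$ are weakly relatively compact, so a diagonal argument over $R$ gives a subsequence $(\rho_{n_k})_k$ and a limit $\rho$ with $\rho_{n_k}\to\rho$ locally uniformly and $\rho_{n_k}\rightharpoonup\rho$ weakly in $H^{1,p}(B_R)$ for every $R$. In particular $\rho\in H^{1,p}_{loc}(\R^d)\cap C(\R^d)$, and $\rho\ge c_R>0$ on $B_R$ for each $R$, so $\rho(x)>0$ for all $x\in\R^d$.

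It remains to pass to the limit: given $\varphi\in C_0^{\infty}(\R^d)$ with $\mathrm{supp}\,\varphi\subset B_R$ and $n_k>R$,
$$
0=\int_{B_R}L\varphi\,\rho_{n_k}\,dx=-\int_{B_R}\Big\langle\tfrac12(A-C)\nabla\varphi,\nabla\rho_{n_k}\Big\rangle\,dx+\int_{B_R}\langle\mathbf{H},\nabla\varphi\rangle\,\rho_{n_k}\,dx,
$$
where the first integral converges because $\tfrac12(A-C)\nabla\varphi$ is bounded on $B_R$ (hence in $L^{p/(p-1)}(B_R)$) and $\nabla\rho_{n_k}\rightharpoonup\nabla\rho$ weakly in $L^{p}(B_R)$, and the second converges because $\langle\mathbf{H},\nabla\varphi\rangle\in L^{1}(B_R)$ and $\rho_{n_k}\to\rho$ uniformly on $\overline{B_R}$; hence $\int_{\R^d}L\varphi\,\rho\,dx=0$ for all $\varphi\in C_0^{\infty}(\R^d)$. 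I expect the main obstacle to lie in the middle two steps: constructing the local densities $\rho_n$ in the non-symmetric, merely $H^{1,2}_{loc}$-diffusion setting (where the divergence form and Remark \ref{nonsym} are indispensable) and, above all, arranging that the Harnack and $H^{1,p}$ constants depend only on local data and not on the exhausting radius $n$ — this uniformity, which rests on the explicit dependence of the constants in Theorem \ref{1.1}, is precisely what makes the compactness argument go through.
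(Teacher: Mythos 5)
Your overall strategy coincides with the paper's: rewrite \eqref{inv} by integration by parts as the weak equation $\mathrm{div}\bigl(\tfrac12(A+C^{T})\nabla\rho-\rho\mathbf{H}\bigr)=0$ (your $\tfrac12(A-C)$ is the same matrix), solve on an exhausting sequence of balls, normalize, obtain $n$-uniform two-sided bounds via Harnack and $n$-uniform $H^{1,p}$-bounds via the interior estimate of \cite[Theorem 1.7.4]{BKRS}, and pass to the limit by weak compactness and Arzel\`a--Ascoli. The minor variations (normalizing $\int_{B_1}\rho_n\,dx=1$ instead of $\rho_n(0)=1$; deducing positivity of the limit from the uniform lower bounds rather than by applying Harnack to $\rho$ itself) are harmless, and your limit passage is correct.

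The genuine gap is exactly at the step you defer: producing a \emph{strictly positive} solution of the homogeneous equation on $B_n$. The cited existence results of \cite{BKRS} solve the Dirichlet problem and give \emph{uniqueness}, so the homogeneous equation with zero boundary data has only the trivial solution; one must prescribe nonzero boundary data and then prove interior positivity, which is not automatic in the presence of the drift term $-\rho\mathbf{H}$. The paper's device is to solve the inhomogeneous problem
$\int_{B_n}\langle\tfrac12(A+C^{T})\nabla v_n-v_n\mathbf{H},\nabla\varphi\rangle\,dx=\int_{B_n}\langle\mathbf{H},\nabla\varphi\rangle\,dx$
for $v_n\in H_0^{1,p}(B_n)\cap C^{0,1-d/p}(\overline{B}_n)$ and set $u_n:=v_n+1$, so that $u_n$ solves the homogeneous equation with $u_n=1$ on $\partial B_n$. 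To show $u_n\ge 0$ one needs that $u_n^-$ is again a subsolution of the same drift equation; this is the content of the paper's Lemma \ref{subsol} (whose proof requires an approximation and the compactness Lemma \ref{compactness}, precisely because of the term $bu$), after which the maximum principle \cite[Theorem 2.1.8]{BKRS}/\cite{T77} gives $u_n\ge 0$, and Harnack together with $u_n\equiv 1$ on $\partial B_n$ gives $u_n>0$ in $B_n$. Without this construction your $\rho_n$ do not exist as asserted, so this step needs to be supplied rather than attributed to the black-box existence theory.
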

\begin{proof}
Using integration by parts, \eqref{inv} is equivalent to
\begin{equation} \label{weakdiv}
\int_{\R^d}  \langle \frac{1}{2} (A+C^{T}) \nabla \rho- \rho \,\mathbf{H} , \nabla \varphi  \rangle  dx = 0 \quad \text{ for all } \varphi \in C_0^{\infty}(\R^d).
\end{equation}
By \cite[Proposition 2.1.4, Corollary 2.1.6, Theorem 2.1.8]{BKRS} and Remark \ref{nonsym}, for every $n\in \N$, 
there exists a unique $v_n \in H_{0}^{1,p}(B_n) \cap C^{0,1-d/p}(\overline{B}_n)$ such that
\[
\int_{B_n} \langle  \frac{1}{2} (A+C^{T}) \nabla v_n -  v_n\, \mathbf{H}, \nabla \varphi  \rangle dx= \int_{B_n} \langle \mathbf{H}, \nabla \varphi  \rangle dx \; \; \text{ for all } \varphi \in C_0^{\infty}(B_n).
\]
Let $u_n:= v_n+1$. Then $u_n(x)=1$ for all $x \in \partial B_n$ and 
\[
\int_{B_n}  \langle  \frac{1}{2} (A+C^{T}) \nabla u_n - u_n\, \mathbf{H}, \nabla \varphi  \rangle dx=0, \quad \text{for all } \varphi \in C_0^{\infty}(B_n).
\]
Since $u_n^{-}\le v_n^-$, we see  $u_n^{-} \in H_0^{1,p}(B_n) \cap C^{0,1-d/p}(\overline{B}_n)$. Thus by 
Lemma \ref{subsol}, we get 
\[
\qquad \quad \int_{B_n} \langle  \frac{1}{2} (A+C^{T}) \nabla u_n^{-} -u_n^{-} \,\mathbf{H}, \nabla \varphi  \rangle dx \leq 0, \quad \text{for all }\varphi \in C_0^{\infty}(B_n), \,\varphi \geq 0.
\]
By \cite[Theorem 2.1.8]{BKRS} and Remark \ref{nonsym},  $u_n^{-} \leq 0$, so that $u_n \geq 0$. Suppose there exists $x_0 \in B_n$ with $u_n(x_0)=0$. Then, applying \cite[Corollary 5.2 (Harnack inequality)]{T73}  to $u_n$ on $B_n$, we get $u_n(x)=0$ for all $x\in  B_n$, which contradicts $u_n \in C^{0,1-d/p}(\overline{B}_n)$, since $u_n=1$ on $\partial B_n$. Hence $u_n(x)>0$ for all $x \in B_n$. Now let $\rho_n(x):= u_n(0)^{-1}u_n(x)$, $x\in B_n, n\in \N$. Then $\rho_n(0)=1$ and
\[
\int_{B_n}  \langle  \frac{1}{2} (A+C^{T}) \nabla \rho_n -\rho_n \mathbf{H}, \;\nabla \varphi  \rangle dx=0 \;\; \text{ for all } \varphi \in C_0^{\infty}(B_n).
\]
Fix $r>0$. Then, by \cite[Corollary 5.2]{T73}
\[
\sup_{x \in B_{2r}}\rho_n(x) \leq C_1 \inf_{x \in B_{2r}}\rho_n(x) \; \text{ for all } n>2r,
\]
where $C_1$ is independent of $\rho_n$, $n>2r$. Thus
\[
\sup_{x \in B_{2r}}\rho_n(x) \leq C_1 \; \text{ for all } n>2r.
\]
By \cite[Theorem 1.7.4]{BKRS} and Remark \ref{nonsym}
$$
\| \rho_n \|_{H^{1,p}(B_{r})} \leq C_2 \|\rho_n \|_{L^1(B_{2r})} \leq C_1 C_2 \,dx(B_{2r}), \; \text{ for all } n>2r,
$$
where $C_2$ is independent of $(\rho_n)_{n>2r}$.
By weak compactness of balls in $H_0^{1,p}(B_r)$ and the Arzela-Ascoli Theorem, there exist $(\rho_{n,r})_{n \geq 1} \subset (\rho_n)_{n >2r}$ and $\rho_{(r)} \in H^{1,p}(B_r) \cap C^{0,1-d/p}(\overline{B}_r)$
such that
$$
\rho_{n,r} \rightarrow \rho_{(r)} \;\text{ weakly in } H^{1,p}(B_r), \qquad \rho_{n,r} \rightarrow \rho_{(r)} \;\text{ uniformly on } \overline{B}_r.
$$
Considering $(\rho_{n,k})_{n \geq 1} \supset (\rho_{n,k+1})_{n \geq 1}$, $k\in \N$, we get $\rho_{(k)}=\rho_{(k+1)}$ on $B_{k}$, hence we can well-define $\rho$ as 
$$
\rho := \rho_{(k)} \text{ on } B_{k}, k\in \N. 
$$
Then $\rho \in H_{loc}^{1,p}(\R^d) \cap C(\R^d)$  with  $\rho(x) \geq 0$, $x\in \R^d$, $\rho(0)=1$ and for any $n \in \N$
\[
\int_{B_n} \langle  \frac{1}{2} (A+C^{T}) \nabla \rho -\rho \,\mathbf{H}, \nabla \varphi  \rangle dx=0 \;\; \text{ for all } \varphi \in C_0^{\infty}(B_n).
\]
By applying the Harnack inequality to $\rho$ on $B_r$ with $n>r$
\[
1=\rho(0) \leq \sup_{x \in B_{r}}\rho(x) \leq C_3 \inf_{x \in B_{r}}\rho(x),
\]
hence $\rho(x)>0$ for all $x \in B_r$. Therefore $\rho(x)>0$ for all $x \in \R^d$ and \eqref{inv} holds.
\end{proof}\\
From now on unless otherwise stated, we fix $\rho$ as in Theorem \ref{eim}. Set
$$
m:=\rho\,dx.
$$
Using integration by parts the following can be easily shown.
\begin{lem}\label{anti0}
If $Q:=(q_{ij})_{1 \leq i,j \leq d}$ is a $d \times d$ matrix of functions with $-q_{ji} =  q_{ij} \in H_{loc}^{1.2}(\R^d)\cap L^{\infty}_{loc}(\R^d)$, $1\le i,j\le d$. Then $\beta^{\rho, Q} \in L^2_{loc}(\R^d, \R^d,m)$ and $\beta^{\rho, Q}$ is weakly divergence free with respect to $m$, i.e. 
$$
\int_{\R^d} \langle \beta^{\rho, Q}, \nabla f  \rangle dm = 0, \quad \text{ for all } f \in C_0^{\infty}(\R^d).
$$
\end{lem}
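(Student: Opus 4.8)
The plan is to prove both assertions by the direct computation hinted at in the statement: write $\langle\beta^{\rho,Q},\nabla f\rangle\,\rho$ as one half of a sum of terms $\partial_j(\rho q_{ij})\,\partial_i f$, integrate by parts to shift the derivative onto $f$, and use the antisymmetry of $Q$ to see that the resulting pointwise sum $\sum_{i,j}q_{ij}\,\partial_i\partial_j f$ vanishes.

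For the integrability claim I would first observe that on every open ball $B$ the function $\rho$, being continuous and strictly positive, satisfies $0<\inf_{\overline B}\rho\le\sup_{\overline B}\rho<\infty$; hence $L^2_{loc}(\R^d,\R^d,m)$ and $L^2_{loc}(\R^d,\R^d,dx)$ agree as sets, and it suffices to check $\beta^{\rho,Q}\in L^2_{loc}(\R^d,\R^d,dx)$. Writing $\beta^{\rho,Q}_i=\frac12\sum_{j}\big(\partial_j q_{ij}+q_{ij}\,\partial_j\rho/\rho\big)$, the terms $\partial_j q_{ij}$ lie in $L^2_{loc}(\R^d)$ because $q_{ij}\in H^{1,2}_{loc}(\R^d)$, and the terms $q_{ij}\,\partial_j\rho/\rho$ lie in $L^2_{loc}(\R^d)$ because $q_{ij}\in L^\infty_{loc}(\R^d)$, $1/\rho\in L^\infty_{loc}(\R^d)$ (again by continuity and positivity of $\rho$), and $\partial_j\rho\in L^p_{loc}(\R^d)\subset L^2_{loc}(\R^d)$ since $\rho\in H^{1,p}_{loc}(\R^d)$ with $p>d\ge 2$. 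This settles the first assertion.

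For the divergence-free property I would fix $f\in C_0^{\infty}(\R^d)$ and use the Leibniz rule for Sobolev functions: since $\rho\in H^{1,p}_{loc}(\R^d)\cap L^\infty_{loc}(\R^d)$ and each $q_{ij}\in H^{1,2}_{loc}(\R^d)\cap L^\infty_{loc}(\R^d)$, the product $\rho q_{ij}$ belongs to $H^{1,2}_{loc}(\R^d)$ with $\partial_j(\rho q_{ij})=\rho\,\partial_j q_{ij}+q_{ij}\,\partial_j\rho$. Then
\begin{align*}
\int_{\R^d}\langle\beta^{\rho,Q},\nabla f\rangle\,dm&=\frac12\int_{\R^d}\sum_{i,j=1}^d\big(\rho\,\partial_j q_{ij}+q_{ij}\,\partial_j\rho\big)\,\partial_i f\,dx\\
&=\frac12\int_{\R^d}\sum_{i,j=1}^d\partial_j(\rho q_{ij})\,\partial_i f\,dx,
\end{align*}
and integrating by parts in the variable $x_j$ (which is legitimate because $\rho q_{ij}\in H^{1,2}_{loc}(\R^d)$ and $\partial_i f\in C_0^{\infty}(\R^d)$) turns the right-hand side into $-\frac12\int_{\R^d}\rho\sum_{i,j=1}^d q_{ij}\,\partial_i\partial_j f\,dx=0$, the last integral being zero because $q_{ij}=-q_{ji}$ while $\partial_i\partial_j f=\partial_j\partial_i f$.

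I expect the only step needing real care --- and the reason the hypotheses ask for $H^{1,2}_{loc}(\R^d)$ rather than merely $VMO$ --- to be the Leibniz rule together with the ensuing integration by parts, i.e. the transitions from the logarithmic-derivative form to $\partial_j(\rho q_{ij})$ and then to $q_{ij}\,\partial_i\partial_j f$. I would justify these by a standard mollification argument: on a ball $B\supset\mathrm{supp}\,f$, replace $\rho$ and $q_{ij}$ by mollifications $\rho^{(n)},q_{ij}^{(n)}\in C^{\infty}(\overline B)$, for which the identity is elementary, and pass to the limit using $\rho^{(n)}\to\rho$ in $H^{1,p}(B)$ and uniformly, $q_{ij}^{(n)}\to q_{ij}$ in $H^{1,2}(B)$ and boundedly; H\"older's inequality and $p>d\ge2$ make all the occurring products converge in $L^1(B)$. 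The remaining bookkeeping is routine.
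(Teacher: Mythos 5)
Your proof is correct and follows exactly the route the paper intends: the paper gives no written proof, stating only that the lemma follows ``using integration by parts,'' and your argument (local integrability from $q_{ij}\in H^{1,2}_{loc}\cap L^\infty_{loc}$, $\rho\in H^{1,p}_{loc}$, $1/\rho\in L^\infty_{loc}$; then the Leibniz rule to form $\partial_j(\rho q_{ij})$, integration by parts onto $\partial_i f$, and cancellation via antisymmetry of $Q$ against the symmetric Hessian) is precisely that computation, carried out with the appropriate care.
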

\bigskip
Define
$$
\overline{\mathbf{B}}:= \mathbf{G}- \beta^{\rho, A+C^{T}}.
$$ 
Note that $\overline{\mathbf{B}}=\big (\mathbf{G}-\frac{1}{2}\nabla (A+C^{T})\big)-  \frac{(A+C^{T}) \nabla \rho}{2\rho} \in L^p_{loc}(\R^d, \R^d)$. Moreover,  
using \eqref{inv} and Lemma \ref{anti0}, we can see that $\beta^{\rho, C^{T}}+\overline{\mathbf{B}}\in L^2_{loc}(\R^d, \R^d,m)$ is weakly divergence free with respect to $m$, i.e. 
\begin{equation} \label{weakdiv2}
\int_{\R^d} \langle \beta^{\rho, C^{T}}+\overline{\mathbf{B}}, \nabla f  \rangle dm = 0 \quad \text{ for all } f \in C_0^{\infty}(\R^d).
\end{equation}
For $f,g \in C_0^{\infty}(\R^d)$, define
$$ 
\mathcal{E}^0(f,g): = \int_{\R^d} \langle A\nabla f, \nabla g \rangle \,dm.
$$
Then $(\mathcal{E}^0, C_0^{\infty}(\R^d))$ is closable in $L^2(\R^d,m)$. We denote its closure by $(\mathcal{E}^0, D(\mathcal{E}^0))$ and its associated generator by $(L^0, D(L^0))$. Since $C_0^{\infty}(\R^d) \subset D(L^0)_{0,b}$ we have that $D(L^0)_{0,b}$ is a dense subset of  $L^1(\R^d, m)$, and furthermore
$$
L^0 f  = \frac12\text{trace}(A\nabla^2 f)+ \langle \beta^{\rho, A}, \nabla f \rangle \in L^2(\R^d, m) \quad \text{ for all } f \in C_0^{\infty}(\R^d).
$$
Define
$$
Lf = L^0 f + \langle \beta^{\rho, C^{T}}+\overline{\mathbf{B}}, \nabla f \rangle, \;\;  f \in D(L^0)_{0,b}.
$$
Then $(L,D(L^0)_{0,b})$ is an extension of  $(L,C_0^{\infty}(\R^d))$ as defined in \eqref{defofL}. By \cite[Theorem 1.5]{St99}, there exists a closed extension $(\overline{L}, D(\overline{L}))$ of $(L,D(L^0)_{0,b})$
in $L^1(\R^d, m)$ which generates a sub-Markovian $C_0$-semigroup of contractions $(T_t)_{t>0}$ on $L^1(\R^d, m)$.  Restricting $(T_t)_{t> 0}$ to $L^1(\R^d,m)_b$, it is well-known that $(T_t)_{t> 0}$ can be extended to a sub-Markovian $C_0$-semigroup of contractions on each $L^r(\R^d,m)$, $r\in [1,\infty)$. Denote by $(L_r, D(L_r))$ the corresponding closed generator with graph norm
$$
\|f\|_{D(L_r)}:=\|f\|_{L^r(\R^d,m)}+ \|L_r f\|_{L^r(\R^d,m)},
$$
and by $(G_{\alpha})_{\alpha>0}$ the corresponding resolvent. For $(T_t)_{t>0}$ and $(G_{\alpha})_{\alpha>0}$ we do not explicitly denote in the notation on which $L^r(\R^d,m)$-space they act. We assume that this is clear from the context. Moreover, $(T_t)_{t>0}$ and $(G_{\alpha})_{\alpha>0}$ can be uniquely defined on $L^{\infty}(\R^d,m)$, but are no longer strongly continuous there.\\
For $f \in C_0^{\infty}(\R^d)$
\begin{eqnarray*}\label{41c}
\widehat{L}f &: =& L^0 f- \langle \beta^{\rho, C^{T}}+\overline{\mathbf{B}}, \nabla f \rangle= \frac12\text{trace}(A\nabla^2 f)+\langle\widehat{\mathbf{G}}, \nabla f \rangle,
\end{eqnarray*}
with
$$
\widehat{\mathbf{G}}:=(\widehat{g}_1, \ldots, \widehat{g}_d)=2\beta^{\rho,A}-\mathbf{G} = \beta^{\rho,A+C}-\overline{\mathbf{B}} \in L^{2}_{loc}(\R^d, \R^d, m).
$$ 
We see that $L$ and $\widehat{L}$ have the same structural properties, i.e. they are given as the sum of a symmetric second order elliptic differential operator and a divergence free first order perturbation with same integrability condition with respect to the measure $m$. Therefore all what will be derived below for $L$ will hold analogously for $\widehat{L}$. Denote the operators corresponding to $\widehat{L}$ (again defined through \cite[Theorem 1.5]{St99}) by $(\widehat{L}_r, D(\widehat{L}_r))$ for the co-generator on $L^r(\R^d,m)$, $r\in [1,\infty)$, $(\widehat{T}_t)_{t>0}$ for the co-semigroup, $(\widehat{G}_{\alpha})_{\alpha>0}$ for the co-resolvent. By  \cite[Section 3]{St99}, we obtain a corresponding bilinear form with domain 
$D(L_2) \times L^2(\R^d,m) \cup L^2(\R^d,m) \times D(\widehat{L}_2)$ by
\[ 
{\mathcal{E}}(f,g):= \left\{ \begin{array}{r@{\quad\quad}l}
  -\int_{\R^d} L_2 f \cdot g \,dm & \mbox{ for}\ f\in D(L_2), \ g\in L^2(\R^d,m),  \\ 
            -\int_{\R^d} f\cdot \widehat{L}_2 g \,dm  & \mbox{ for}\ f\in L^2(\R^d,m), \ g\in D(\widehat{L}_2). \end{array} \right .
\] 
$\mathcal{E}$ is called the {\it generalized Dirichlet form associated with} $(L_2,D(L_2))$. Using integration by parts, it is easy to see that for $f,g\in C_0^{\infty}(\mathbb{R}^d)$
\begin{eqnarray}\label{gdf}
{\cal E}(f,g)&=& \frac{1}{2}\int_{\mathbb{R}^d}\langle A\nabla f,\nabla g\rangle \, dm-\int_{\mathbb{R}^d}\langle \beta^{\rho, C^T}+\overline{\mathbf{B}},\nabla f\rangle g\, dm   \nonumber\\
&=&\frac{1}{2}\int_{\mathbb{R}^d}\langle (A+C)\nabla f,\nabla g\rangle \, dm-\int_{\mathbb{R}^d}\langle \overline{\mathbf{B}},\nabla f\rangle g\, dm,
\end{eqnarray}
and
\begin{align*}
\displaystyle L_2 f &=&  \frac12 \sum_{i,j=1}^{d} a_{ij}\partial_i \partial_jf+\sum_{i=1}^{d}g_i\partial_i f= \frac12\text{trace}(A\nabla^2 f)+ \langle \beta^{\rho, A+C^{T}}, \nabla f \rangle +\langle \overline{\mathbf{B}}, \nabla f  \rangle,  \\
\displaystyle \widehat{L}_2 f&=&  \frac12 \sum_{i,j=1}^{d} a_{ij}\partial_i \partial_jf+\sum_{i=1}^{d} \widehat{g}_i\partial_i f=  \frac12\text{trace}(A\nabla^2 f)+ \langle \beta^{\rho, A+C}, \nabla f \rangle  - \langle \overline{\mathbf{B}}, \nabla f \rangle.
\end{align*}
\bigskip

\subsection{Regularity results for resolvent and semigroup }\label{two}
\begin{theo}\label{4.1}
Assume \emph{(a)}. Then 
$$
\rho G_{\alpha} g \in H^{1,p}_{loc}(\R^d), \quad \forall g \in 
\cup_{r\in [q,\infty]} L^r(\R^d,m),
$$ 
and for any open balls $B$, $B'$ with $\overline{B} \subset B'$,
\begin{equation*}
\| \rho\, G_{\alpha}g \|_{H^{1,p}(B)} \le c_0 \left (\| g \|_{L^q(B',m)} + \| G_{\alpha}g \|_{L^1(B',m)}\right ),
\end{equation*}
where $c_0$ is independent of $g$.
\end{theo}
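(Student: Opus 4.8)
The plan is to identify $v:=\rho\,G_\alpha g$ as a weak solution of a divergence form equation covered by the local Sobolev estimate \cite[Theorem 1.8.3]{BKRS} (which, by Remark \ref{nonsym}, remains valid for a non-symmetric principal part), and then to translate the resulting bound into the asserted one. First I would treat $g\in L^2(\R^d,m)$ and set $u:=G_\alpha g$. By the construction in \cite{St99}, $u\in D(L_2)$ lies in the form domain $D(\mathcal{E}^0)$, which by the local uniform strict ellipticity of $A$ and the local boundedness of $\rho$ from above and below is contained in $H^{1,2}_{loc}(\R^d)$; moreover, the resolvent identity $\mathcal{E}(u,\varphi)+\alpha\int_{\R^d}u\varphi\,dm=\int_{\R^d}g\varphi\,dm$ together with \eqref{gdf} gives, for every $\varphi\in C_0^\infty(\R^d)$,
$$\frac{1}{2}\int_{\R^d}\langle (A+C)\nabla u,\nabla\varphi\rangle\,dm-\int_{\R^d}\langle\overline{\mathbf{B}},\nabla u\rangle\,\varphi\,dm+\alpha\int_{\R^d}u\varphi\,dm=\int_{\R^d}g\varphi\,dm ,$$
where $\overline{\mathbf{B}}=\mathbf{H}-\frac{1}{2\rho}(A+C^T)\nabla\rho$.

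Next I would pass to $v=\rho u$. Since $u\in H^{1,2}_{loc}(\R^d)$ and the vector field $\frac{1}{2}(A+C^T)\nabla\rho-\rho\mathbf{H}$ appearing in \eqref{weakdiv} belongs to $L^2_{loc}(\R^d,\R^d)$ (using $\nabla\rho\in L^p_{loc}$ from Theorem \ref{eim}, $\mathbf{H}\in L^p_{loc}$ from assumption (a), and $p>d\ge 2$), the identity \eqref{weakdiv} extends by approximation from $C_0^\infty(\R^d)$ to test functions in $H^{1,2}(\R^d)$ with compact support. Applying it with $\psi=\varphi u$ and substituting into the weak resolvent equation cancels the terms in which $\nabla\rho$ is paired against $\varphi$; using $\rho\nabla u=\nabla v-u\nabla\rho$ and $C+C^T=0$, a short computation yields
$$\frac{1}{2}\int_{\R^d}\langle (A+C)\nabla v,\nabla\varphi\rangle\,dx-\int_{\R^d}\langle\widetilde{\mathbf{b}}\,v,\nabla\varphi\rangle\,dx+\alpha\int_{\R^d}v\varphi\,dx=\int_{\R^d}\rho\,g\,\varphi\,dx ,\qquad \varphi\in C_0^\infty(\R^d),$$
with $\widetilde{\mathbf{b}}:=\frac{1}{\rho}A\nabla\rho-\mathbf{H}\in L^p_{loc}(\R^d,\R^d)$. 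Moreover $v=\rho u\in H^{1,2}_{loc}(\R^d)$, since $u\nabla\rho\in L^2_{loc}$ by the Sobolev embedding and $p>d$. Thus $v$ is a weak solution in $H^{1,2}_{loc}(\R^d)$ of a divergence form equation whose principal part $\frac{1}{2}(A+C)$ has continuous (hence $VMO_{loc}$) entries and is locally uniformly strictly elliptic and bounded, whose first-order coefficient $\widetilde{\mathbf{b}}$ is in $L^p_{loc}$ with $p>d$, and whose right-hand side $\rho g$ lies in $L^q_{loc}(\R^d)$ because $g\in L^q_{loc}(\R^d,m)=L^q_{loc}(\R^d,dx)$.

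For balls $B,B'$ with $\overline{B}\subset B'$, \cite[Theorem 1.8.3]{BKRS} together with Remark \ref{nonsym} then gives $v\in H^{1,p}(B)$ and an estimate $\|v\|_{H^{1,p}(B)}\le C\big(\|\rho g\|_{L^q(B')}+\|v\|_{L^1(B')}\big)$ with $C$ independent of $g$. Since $\rho$ is continuous and strictly positive, $\|\rho g\|_{L^q(B')}\le(\sup_{B'}\rho)(\inf_{B'}\rho)^{-1/q}\|g\|_{L^q(B',m)}$ and $\|v\|_{L^1(B')}=\|G_\alpha g\|_{L^1(B',m)}$, which is the claimed inequality; in particular $v\in H^{1,p}_{loc}(\R^d)$.

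Finally, for general $g\in L^r(\R^d,m)$ with $r\in[q,\infty]$ I would approximate: for $r<\infty$ by $g_n:=g\cdot 1_{\{|g|\le n\}\cap B_n}\in L^2(\R^d,m)\cap L^r(\R^d,m)$, which converges to $g$ in $L^r(\R^d,m)$; for $r=\infty$ by monotone approximation of $g^{+}$ and $g^{-}$ from below by $g^{\pm}\cdot 1_{B_n}$, using the positivity and monotone convergence of $(G_\alpha)$. In both cases $G_\alpha g_n\to G_\alpha g$ in $L^1_{loc}(\R^d,m)$, while the estimate just proved bounds $\|\rho\,G_\alpha g_n\|_{H^{1,p}(B)}$ uniformly in $n$ by $c_0\big(\|g\|_{L^q(B',m)}+\|G_\alpha g\|_{L^1(B',m)}\big)$ (using $q\le r$ and $m(B')<\infty$); weak compactness of bounded sets in $H^{1,p}(B)$ and lower semicontinuity of the norm then transfer the bound to $v=\rho\,G_\alpha g$, which in particular lies in $H^{1,p}_{loc}(\R^d)$. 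The step I expect to be the main obstacle is the derivation of the divergence form equation for $\rho u$: in particular, justifying the use of $\psi=\varphi u$ as a test function in \eqref{weakdiv} and bookkeeping the lower-order terms with only $u\in H^{1,2}_{loc}$ and merely $L^p_{loc}$/$L^2_{loc}$ coefficients at hand; once that equation is in place, the remainder is a direct application of \cite[Theorem 1.8.3]{BKRS} and a routine approximation.
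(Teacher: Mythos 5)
Your proposal follows the same overall architecture as the paper's proof: identify $v=\rho\,G_\alpha g$ as a weak solution of a divergence form equation with principal part $\frac12(A+C)$ and an $L^p_{loc}$ first--order coefficient, apply \cite[Theorem 1.8.3]{BKRS} (via Remark \ref{nonsym}) for the $H^{1,p}_{loc}$ regularity together with \cite[Theorem 1.7.4]{BKRS} for the quantitative bound, and then extend by approximation. Your equation for $v$ is in fact identical to the paper's \eqref{resolventregularity2}: one checks that your $-\widetilde{\mathbf{b}}=\mathbf{H}-\frac{A\nabla\rho}{\rho}$ coincides with $\widehat{\mathbf{F}}=-\frac{(A+C)\nabla\rho}{2\rho}+\overline{\mathbf{B}}$ after using $C+C^T=0$. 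Where you genuinely differ is in how that equation is derived. The paper works on the dual side: it tests $G_\alpha g$ against $(\alpha-\widehat{L}_2)\varphi$ for $\varphi\in C_0^\infty(\R^d)$, so the only explicit operator expression needed is $\widehat{L}_2\varphi$ for smooth $\varphi$, and a single integration by parts (using only $\rho G_\alpha g\in H^{1,2}_{loc}$) produces \eqref{resolventregularity2} directly; the invariance identity \eqref{weakdiv} never has to be tested against non-smooth functions. Your route works on the primal side: you use the bilinear form representation of $\mathcal{E}(G_\alpha g,\varphi)$ and then cancel the $\nabla\rho$ terms by plugging $\psi=\varphi u$ into \eqref{weakdiv}. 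This is workable (the extension of \eqref{weakdiv} to compactly supported $H^{1,2}$ test functions is legitimate since the vector field there is $L^p_{loc}$ with $p>d\ge 2$), but it is the more delicate path, exactly as you anticipate.

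Two points in your version need repair or at least justification. First, you start with $g\in L^2(\R^d,m)$, but the inclusion $G_\alpha g\in D(\mathcal{E}^0)$ and the representation $\mathcal{E}(G_\alpha g,\varphi)=\frac12\int\langle(A+C)\nabla G_\alpha g,\nabla\varphi\rangle\,dm-\int\langle\overline{\mathbf{B}},\nabla G_\alpha g\rangle\varphi\,dm$ are only provided by \cite[Theorem 1.5]{St99} for \emph{bounded} elements of $D(\overline{L})$ (the paper uses $D(\overline{L})_b\subset D(\mathcal{E}^0)$, and \eqref{gdf} is stated only for $C_0^\infty$ functions); for unbounded $G_\alpha g$ neither is immediate. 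This is easily fixed by starting, as the paper does, with $g\in C_0^\infty(\R^d)$ (or $g\in L^2\cap L^\infty$) and deferring everything else to the approximation step. Second, your closing approximation should make explicit that the right-hand side of the uniform bound for $\|\rho G_\alpha g_n\|_{H^{1,p}(B)}$ converges to (or is dominated by) $c_0(\|g\|_{L^q(B',m)}+\|G_\alpha g\|_{L^1(B',m)})$; with $g_n\to g$ in $L^r$, $r\ge q$, and $m(B')<\infty$ this is routine and matches the paper's density argument. With these adjustments your argument is a correct variant of the paper's proof.
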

\begin{proof}
Let $g \in C_0^{\infty}(\R^d)$ and $\alpha >0$. Then for all $\varphi \in C_0^{\infty}(\R^d)$,
\begin{eqnarray}\label{resolventregularity1}
\int_{\R^d} (\alpha- \widehat{L}_2) \varphi \cdot \big(G_{\alpha} g\big) \,dm = \int_{\R^d} \widehat{G}_{\alpha}(\alpha-\widehat{L}_2) \varphi  \cdot g \,dm=\int_{\R^d} \varphi  g \,dm. \vspace{-0.4em}
\end{eqnarray}
Note that $G_{\alpha} g \in D(\overline{L})_b \subset D(\mathcal{E}^0)$ by \cite[Theorem 1.5]{St99}. Since $\rho$ is locally bounded below and $A$ satisfies \eqref{uniformell}, we have $D(\mathcal{E}^0) \subset H_{loc}^{1,2}(\R^d)$ and it follows $\rho  G_{\alpha} g \in H_{loc}^{1,2}(\R^d)$.  Define
\begin{eqnarray}\label{fhat}
\widehat{\mathbf{F}}:= \frac{1}{2} \nabla (A+C)- \mathbf{\widehat{G}}= -\frac{(A+C) \nabla \rho}{2 \rho}+\overline{\mathbf{B}} \in L^p_{loc}(\R^d, \R^d).
\end{eqnarray}
Given any open ball $B''$ and $\varphi \in C_0^{\infty}(B'')$, we have using integration by parts 
in the left hand side of \eqref{resolventregularity1}
\begin{eqnarray}\label{resolventregularity2}
\int_{B''}   \left [\langle \frac12 (A+C) \nabla (\rho G_{\alpha}g)  + ( \rho G_{\alpha}g )\widehat{\mathbf{F}}, \nabla \varphi \rangle + \alpha ( \rho G_{\alpha} g) \varphi \right ]dx  = \int_{B''}  (\rho g ) \varphi  dx.
\end{eqnarray}
By \cite[Theorem 1.8.3]{BKRS} and Remark \ref{nonsym}, for any open ball $B'$ with $\overline{B'} \subset B''$, we have $\rho G_{\alpha}g \in H^{1,p}(B')$.  Thus by \cite[Theorem 1.7.4]{BKRS} and Remark \ref{nonsym}, we obtain for any open ball $B$ with $\overline{B} \subset B'$, $r\in [q, \infty)$
\begin{eqnarray}\label{rhoresolventest}
\| \rho G_{\alpha} g \|_{H^{1,p}(B)} &\le& c_1 \left (\|\rho g \|_{L^q(B',dx)} + \|\rho G_{\alpha}g \|_{L^1(B',dx)}\right )\nonumber  \\
&\leq& \underbrace{c_1 (\sup_{B'} \rho^{\frac{q-1}{q}}  \vee 1)}_{=:c_0} \big( \|g\|_{L^q(B', m)} +  \| G_{\alpha}g \|_{L^1(B',m)  } \big) \\ \nonumber
\end{eqnarray}
By denseness of $C_0^{\infty}(\R^d)$ in $L^r(\R^d, m)$, \eqref{rhoresolventest} extends to $g\in L^r(\R^d,m)$, $r \in [q, \infty)$. 
For $g \in L^{\infty}(\R^d, m)$, let $g_n:=g 1_{B_n} \in L^{q}(\R^d, m), n\ge 1$. Then 
$\|g-g_n\|_{L^q(B', m)} + \| G_{\alpha}(g-g_n) \|_{L^1(B',m)} \to 0$ as $n\to \infty$. Hence \eqref{rhoresolventest} also extends to  $g\in L^\infty(\R^d,m)$.
\end{proof}

\begin{rem} \label{regular1}
\cite[Proposition 3.6]{LT18} holds in our more general situation with exactly the same proof.
\end{rem}
\begin{theo}\label{1-3reg} 
Assume \emph{(a)}. For each $s \in [1, \infty]$, consider the $L^s (\R^d, m)$-semigroup $(T_t)_{t>0}$. Then for any $f \in L^s(\R^d, m)$ and $t>0$, $T_t f$ has a locally H\"{o}lder continuous $m$-version $P_t f$ on $\R^d$. More precisely, $P_{\cdot}f(\cdot)$ is locally parabolic H\"{o}lder continuous on $\R^d \times (0, \infty)$ and for any bounded open sets $U$, $V$ in $\R^d$ with $\overline{U} \subset V$ and $0<\tau_3<\tau_1<\tau_2<\tau_4$, i.e. $[\tau_1, \tau_2] \subset (\tau_3, \tau_4)$, we have for some $\gamma\in (0,1)$ the following estimate for all $f \in \cup_{s\in[1,\infty]} L^s(\R^d, m)$ with $f \geq 0$,
\begin{equation} \label{thm main est} 
\|P_{\cdot} f(\cdot)\|_{C^{\gamma; \frac{\gamma}{2}}(\overline{U} \times [\tau_1, \tau_2])} \leq  C_6 \| P_{\cdot} f(\cdot) \|_{L^1( V \times (\tau_3, \tau_4), m\otimes dt) },
\end{equation}
where $C_6, \gamma$ are constants that depend on $\overline{U} \times [\tau_1, \tau_2], V \times (\tau_3, \tau_4)$, but are independent of $f$. 
\end{theo}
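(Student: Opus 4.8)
The plan is to reduce the assertion to an interior regularity result for the divergence-form \emph{parabolic} equation solved by $v:=\rho\,T_\cdot f$ on space-time, in exact parallel with the proof of Theorem \ref{4.1}, where the elliptic resolvent equation \eqref{resolventregularity2} is now replaced by a weak parabolic equation. Since $f=f^+-f^-$ and both the existence of a continuous $m$-version and the estimate \eqref{thm main est} behave additively in $f$, it suffices to treat $f\ge0$. Since the weak formulation below will only be established for test functions supported in $\R^d\times(0,\infty)$, we automatically work on interior cylinders with $\tau_3>0$, which is all the statement claims. I would first prove everything for $f$ in a dense class of nice functions --- e.g.\ $C_0^\infty(\R^d)$, or the range $G_\alpha\big(C_0^\infty(\R^d)\big)$, which is dense in $L^s(\R^d,m)$ for $s<\infty$ and contained in $D(L_2)$ --- for which $t\mapsto T_tf$ is continuously differentiable into $L^2(\R^d,m)$ with derivative $L_2T_tf$. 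The general case, including $s=\infty$, then follows by approximating with $f\wedge n$ or $f1_{B_n}$ and passing to the limit in \eqref{thm main est}, whose right-hand side involves only $L^1$-norms and is hence stable under monotone approximation --- the device already used at the end of the proof of Theorem \ref{4.1}.

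For such $f$ and $\varphi\in C_0^\infty(\R^d)\subset D(\widehat{L}_2)$, the definition of the generalized Dirichlet form gives $\tfrac{d}{dt}\int_{\R^d}(T_tf)\varphi\,dm=\int_{\R^d}(L_2T_tf)\varphi\,dm=-\mathcal{E}(T_tf,\varphi)=\int_{\R^d}(T_tf)(\widehat{L}_2\varphi)\,dm$, so that, writing $u(x,t):=T_tf(x)$ and inserting cut-offs in time, $\int_0^\infty\!\int_{\R^d}\big(u\,\partial_t\psi+u\,\widehat{L}_2\psi\big)\,dm\,dt=0$ for all $\psi\in C_0^\infty\big(\R^d\times(0,\infty)\big)$, with $\widehat{L}_2$ acting in $x$. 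Performing precisely the integration by parts that leads from \eqref{resolventregularity1} to \eqref{resolventregularity2}, with $\widehat{\mathbf{F}}$ as in \eqref{fhat}, one gets that $v:=\rho\,u$ weakly solves $\partial_t v=\mathrm{div}\big(\tfrac12(A+C)\nabla v+v\,\widehat{\mathbf{F}}\big)$, i.e.
\[
-\int_0^\infty\!\int_{\R^d}v\,\partial_t\psi\,dx\,dt+\int_0^\infty\!\int_{\R^d}\big\langle\tfrac12(A+C)\nabla v+v\,\widehat{\mathbf{F}},\nabla\psi\big\rangle\,dx\,dt=0,\qquad \psi\in C_0^\infty\big(\R^d\times(0,\infty)\big),
\]
where $\tfrac12(A+C)$ is continuous (hence $VMO$), locally bounded, with symmetric part $\tfrac12A$ uniformly strictly elliptic on each ball by \eqref{uniformell}, and $\widehat{\mathbf{F}}\in L^p_{loc}(\R^d,\R^d)$ with $p>d$. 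Spatial $H^{1,p}_{loc}$-regularity of $v(\cdot,t)$, locally uniformly in $t$, can be extracted from Theorem \ref{4.1} (after writing $T_tf$, for $f$ in the resolvent's range, as a resolvent applied to an $L^q$-function) or directly from local parabolic $H^{1,p}$-theory.

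Next I invoke the parabolic counterparts of the elliptic results used in the proof of Theorem \ref{4.1}: the time-dependent analogue of \cite[Theorem 1.8.3]{BKRS} for the parabolic $H^{1,p}$-bound (available from the $VMO$-based $L^p$-theory for parabolic equations, the lower-order terms lying in $L^\infty_t L^p_x$ with $p>d$), and the parabolic De Giorgi--Nash--Moser / Aronson--Serrin estimate --- the analogue of \cite[Theorem 1.7.4]{BKRS} --- which yields, for $\overline U\subset V$ and $[\tau_1,\tau_2]\subset(\tau_3,\tau_4)$ with $\tau_3>0$, some $\gamma\in(0,1)$ and a constant independent of $v$ with $\|v\|_{C^{\gamma;\frac{\gamma}{2}}(\overline U\times[\tau_1,\tau_2])}\le C\,\|v\|_{L^1(V\times(\tau_3,\tau_4))}$. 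Since $\rho\in C(\R^d)$ is strictly positive, it is bounded away from $0$ and $\infty$ on $\overline V$; dividing by $\rho$ transfers the local parabolic H\"older continuity and the estimate to $u=v/\rho$ and, after adjusting the constant via $\sup_V\rho$ and $\inf_V\rho$, replaces $\|v\|_{L^1(V\times(\tau_3,\tau_4))}$ by $\|u\|_{L^1(V\times(\tau_3,\tau_4),m\otimes dt)}$. This produces the continuous version $P_\cdot f(\cdot)$ and \eqref{thm main est} for nice $f\ge0$; the approximation described in the first paragraph then upgrades it to all $f\in L^s(\R^d,m)$, $s\in[1,\infty]$ (for $f\ge0$ via Cauchyness of $(P_\cdot f_n)_n$ in $C^{\gamma;\frac{\gamma}{2}}$ on interior cylinders, using monotone convergence when $s=\infty$; for general sign via $f=f^+-f^-$), and joint continuity in $(x,t)$ together with continuity of $t\mapsto T_tf$ forces $P_tf$ to be an $m$-version of $T_tf$ for every $t>0$.

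The main obstacle is the first paragraph's reduction: rigorously deriving the space-time weak formulation and controlling the minimal time-regularity of $t\mapsto T_tf$ needed to give meaning to $\int\!\int v\,\partial_t\psi$ --- a difficulty that would be circular if attacked for general $f\in L^s$, which is why the argument is anchored at $f$ in the resolvent's range (or $C_0^\infty(\R^d)$), where $t\mapsto T_tf$ is genuinely $C^1$ into $L^2(\R^d,m)$, and only then propagated --- through the purely $L^1$-based estimate \eqref{thm main est} --- to all $f$ and all $s$. A secondary, bookkeeping issue is pinning down the precise parabolic versions of \cite[Theorems 1.7.4 and 1.8.3]{BKRS} for continuous leading coefficient and $L^p_{loc}$ ($p>d$) lower-order terms; as in Remarks \ref{nonsym} and \ref{regular1} these rest on the same $VMO$ $L^p$-theory and Moser iteration underlying the elliptic statements, together with local boundedness of $\rho$.
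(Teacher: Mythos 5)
Your proposal is correct and follows essentially the same route as the paper: pass to $v=\rho\,T_\cdot f$, derive the space--time weak divergence-form equation with lower-order term $\widehat{\mathbf{F}}$ by integrating the identity $\int u(\partial_t\psi+\widehat{L}_2\psi)\,dm\,dt=0$ by parts, apply local parabolic H\"older/De Giorgi--Nash--Moser estimates with an $L^1$ right-hand side, divide by $\rho$, and then extend from the resolvent-smoothed class (the paper uses $f_n=nG_nf$, matching your anchor in the resolvent's range) to all of $L^s(\R^d,m)$ via truncation $1_{B_n}(f\wedge n)$. The only cosmetic difference is that the paper quotes the parabolic regularity machinery from \cite[Theorem 3.8]{LT18} rather than re-deriving it, and establishes the needed time-regularity of $u$ via $u\in H^{1,2}(O\times(0,T))$ for $f\in D(L_2)\cap D(L_q)\cap\mathcal{B}_b(\R^d)$, exactly the reduction you identify as the main obstacle.
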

\begin{proof}
The proof is similar to the corresponding proof in \cite[Theorem 3.8]{LT18}, but there are some subtle differences. First assume $f \in D(L_2) \cap D(L_q) \cap \mathcal{B}_b(\R^d)$ with $f \geq 0$. Set $u(x,t):=\rho(x) P_t f(x)$. Then $P_t f\in D(L_q)$ and $\rho \in C(\R^d)$ implies $u \in C\big( \R^d\times[0,\infty) \big)$  by Proposition \ref{regular1}(iii).
Let $T>0$ be arbitrary. Then for any $\varphi\in C^{\infty}_0(\R^d\times (0,T))$
\begin{eqnarray}\label{**}
0=-\int _0^T\int_{\R^d}  \left ( \partial _t \varphi +\widehat{L}_2\varphi  \right ) u\,  dxdt. 
\end{eqnarray}
Since $u \in H^{1,2}(O\times (0,T))$ for any bounded and open set $O\subset \R^d$, using integration by parts in the right hand term of \eqref{**}, we get
\begin{equation}\label{eqfhat}
0=\int_0^T\int_{\R^d} \left ( \frac{1}{2}\langle(A+C) \nabla u, \nabla \varphi \rangle + u \langle \widehat{\mathbf{F}}, \nabla \varphi \rangle -u\partial_t\varphi \right ) dxdt,
\end{equation}
where $\widehat{\mathbf{F}}$ is as in \eqref{fhat}. Then as in \cite[Theorem 3.8]{LT18} 
\begin{eqnarray}\label{rest} 
\hspace{-2em} \|P_{\cdot} f(\cdot)\|_{C^{\gamma; \frac{\gamma}{2}}(\overline{U} \times [\tau_1, \tau_2])} & \leq& \| \rho^{-1} \|_{C^{0,\gamma}(\overline{U} )}   \|\rho(\cdot) P_{\cdot} f(\cdot)\|_{C^{\gamma; \frac{\gamma}{2}}(\overline{U} \times [\tau_1, \tau_2])}  \nonumber    \\
&\leq&  \underbrace{\| \rho^{-1} \|_{C^{0,\gamma}(\overline{U})}  C_2 C_5 }_{=:C_6} \| P_{\cdot} f(\cdot) \|_{L^1( V \times (\tau_3, \tau_4), m\otimes dt) }  \nonumber \\ 
& \leq&  C_6  (\tau_4-\tau_3) \|\rho \|_{L^1(V)}^{\frac{s-1}{s}}  \|f\|_{L^s(\R^d, m) }, \quad s \in [1, \infty],
\end{eqnarray}
where $\gamma$, $C_2$, $C_5$, are as in \cite[Theorem  3.8]{LT18}. \\
For $f \in L^1(\R^d, m) \cap L^{\infty}(\R^d, m)$ with $f \geq 0$ let $f_n:= n G_{n} f$. Then $f_n \in D(L_2) \cap D(L_q) \cap \mathcal{B}_b(\R^d)$ with $f_n \geq 0$ and $f_n \rightarrow f$ in $L^s(\R^d, m)$ for any $s \in [1, \infty)$.
Thus \eqref{rest} including all intermediate inequalities extend to $f \in L^1(\R^d, m) \cap L^{\infty}(\R^d, m)$ with $f \geq 0$. If $f \in L^s(\R^d, m)$, $f \geq 0$ and $s \in [1, \infty)$, let $f_n:= 1_{B_n} \cdot (f \wedge n)$.  Then $f_n \in  L^1(\R^d, m) \cap L^{\infty}(\R^d, m)$ with $f_n \geq 0$ and $f_n \rightarrow f$ in $L^s(\R^d, m)$. Thus \eqref{rest} including all intermediate inequalities extend to $f \in L^s(\R^d, m)$ with $f \geq 0$. For $f \in L^{\infty}(\R^d, m)$, the result follows exactly as in \cite[Theorem 3.8]{LT18}.
\end{proof}
\begin{rem}
Besides the possible non-symmetry of $A+C$ (that also occurs in $\widehat{\mathbf{F}}$), the difference between the proof of \cite[Theorem 3.8]{LT18} and Theorem \ref{1-3reg} is the approximation method. The proof of \cite[Theorem 3.8]{LT18} uses the denseness of  $C_0^{\infty}(\R^d)$ in $L^1(\R^d, m)$. The proof of Theorem \ref{1-3reg} uses the denseness of  $\cup_{\alpha>0} G_{\alpha}\big( L^1(\R^d,m)\cap L^{\infty}(\R^d, m)\big )$ in $L^1(\R^d, m)$.  Using the latter, we can get the corresponding result to \cite[Lemma 4.6]{LT18}  in the following Lemma \ref{irreduci0}.
\end{rem}
\begin{lem}\label{irreduci0}
Assume \emph{(a)}. Then:
\begin{itemize}
\item [(i)] Let $A \in \mathcal{B}(\R^d)$ be such that $P_{t_0} 1_A (x_0)=0$ for some $t_0>0$ and $x_0\in \R^d$. Then $m(A)=0$.
\item [(ii)] Let $A \in \mathcal{B}(\R^d)$ be such that $P_{t_0} 1_A (x_0)=1$ for some $t_0>0$ and $x_0\in \R^d$.  Then $P_t 1_{A}(x)=1$ \;for all $(x,t) \in \R^d \times (0,\infty)$.
\end{itemize}
\end{lem}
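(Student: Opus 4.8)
The plan is to argue as in the proof of \cite[Lemma 4.6]{LT18}, the only change being that the approximation step there is replaced by the denseness of $\cup_{\alpha>0}G_{\alpha}\big(L^1(\R^d,m)\cap L^{\infty}(\R^d,m)\big)$ in $L^1(\R^d,m)$ noted in the preceding remark. The analytic heart of the matter is a parabolic Harnack inequality for nonnegative weak solutions of the divergence form equation \eqref{eqfhat}.

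For (i), fix $t_0>0$ and $x_0\in\R^d$ with $P_{t_0}1_A(x_0)=0$. Applying the construction in the proof of Theorem \ref{1-3reg} to $f:=1_A\in L^{\infty}(\R^d,m)$, $f\ge 0$, the function $u(x,t):=\rho(x)P_tf(x)$ is nonnegative, locally parabolic H\"older continuous on $\R^d\times(0,\infty)$, and is a weak solution of \eqref{eqfhat} on $\R^d\times(0,T)$ for every $T>0$; here the symmetric part of $A+C$ equals $A$, so $A+C$ is locally uniformly strictly elliptic and bounded, and $\widehat{\mathbf{F}}\in L^p_{loc}(\R^d,\R^d)$ with $p>d$. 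Since $\rho>0$ we have $u(x_0,t_0)=0$, so I would invoke the parabolic Harnack inequality for such equations --- the required integrability condition $d/p<1$ holds because $\widehat{\mathbf{F}}$ is time independent and $p>d$ --- and conclude, by a standard space-time chaining argument (every point of $\R^d\times(0,t_0)$ can be joined to $(x_0,t_0)$ by a path with non-decreasing time coordinate, and $\R^d$ is connected), that $u\equiv 0$ on $\R^d\times(0,t_0]$, i.e. $P_t1_A=0$ $m$-a.e. for every $t\in(0,t_0]$. Testing against an arbitrary $\phi\in L^1(\R^d,m)\cap L^{\infty}(\R^d,m)$ with $\phi\ge 0$, and using $\int_{\R^d}(T_t1_A)\,\phi\,dm=\int_{\R^d}1_A\,(\widehat{T}_t\phi)\,dm$ together with $\widehat{T}_t\phi\to\phi$ in $L^1(\R^d,m)$ as $t\downarrow 0$, I obtain $\int_A\phi\,dm=0$ for all such $\phi$, hence $m(A)=0$.

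For (ii), observe first that the argument for (i) actually yields the more general statement: if $P_sg(x_0)=0$ for some $s>0$, $x_0\in\R^d$ and some $g\in L^{\infty}(\R^d,m)$ with $g\ge 0$, then $g=0$ $m$-a.e. Now assume $P_{t_0}1_A(x_0)=1$. Since $0\le P_{t_0}1_A\le P_{t_0}1\le 1$ everywhere (by sub-Markovianity and continuity of the versions), this forces $P_{t_0}1(x_0)=1$ and $P_{t_0}1_{A^c}(x_0)=P_{t_0}1(x_0)-P_{t_0}1_A(x_0)=0$; hence $m(A^c)=0$ by (i), so $1_A=1$ $m$-a.e., and therefore $P_t1_A$ and $P_t1$ are continuous versions of the same function, i.e. $P_t1_A(x)=P_t1(x)$ for all $(x,t)$. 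It remains to show $P_t1\equiv 1$. From $P_t1=P_{t-s}(P_s1)\le P_{t-s}1$ for $0<s<t$ the map $t\mapsto P_t1(x)$ is non-increasing, so $P_{t_0}1(x_0)=1$ gives $P_s1(x_0)=1$ for all $s\in(0,t_0]$; applying the generalized statement to $g:=1-P_s1\ge 0$, which satisfies $P_{t_0-s}g(x_0)=P_{t_0-s}1(x_0)-P_{t_0}1(x_0)=0$, yields $P_s1\equiv 1$ on $\R^d$ for every $s\in(0,t_0)$. Finally, for arbitrary $t>0$ pick $n\in\N$ with $t/n<t_0$; then $T_{t/n}1=1$ $m$-a.e., and by induction $T_{(k+1)t/n}1=T_{t/n}(T_{kt/n}1)=T_{t/n}1=1$ $m$-a.e. for $k=1,\dots,n-1$, so $T_t1=1$ $m$-a.e. and thus $P_t1\equiv 1$; consequently $P_t1_A=P_t1\equiv 1$ on $\R^d\times(0,\infty)$.

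The step I expect to be the main obstacle is the first one in (i): verifying that $u=\rho P_{\cdot}f$ is genuinely a nonnegative weak solution of \eqref{eqfhat} in the appropriate function class --- essentially contained in the proof of Theorem \ref{1-3reg} --- and, more importantly, invoking a parabolic Harnack inequality valid for divergence form parabolic equations with only $L^p_{loc}$ (here time independent, $p>d$) first order coefficients, together with the chaining argument that upgrades the local estimate to the global conclusion $u\equiv 0$ on $\R^d\times(0,t_0]$. Everything else reduces to routine semigroup and density manipulations.
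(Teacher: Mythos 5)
Your argument is correct in substance and, for part (i), rests on exactly the same analytic core as the paper: $u=\rho P_{\cdot}f$ is a nonnegative weak solution of \eqref{eqfhat}, it vanishes at $(x_0,t_0)$, and a parabolic Harnack inequality plus chaining forces it to vanish at all earlier times; the paper does not re-derive this either but defers it to \cite[Lemma 4.6]{LT18}. The organizational differences are worth noting. First, the paper does not apply the weak-solution property to $1_A$ directly: it localizes to $1_{A\cap B_r(x_0)}\in L^1\cap L^\infty(\R^d,m)$ (chosen with $0<m(A\cap B_r(x_0))<\infty$ after assuming $m(A)>0$) and approximates by $f_n=nG_n1_{A\cap B_r}\in D(L_2)\cap D(L_q)\cap\mathcal{B}_b(\R^d)$, for which $u_n=\rho P_{\cdot}f_n\in H^{1,2}_{loc}$ demonstrably satisfies \eqref{**}/\eqref{eqfhat}; your direct use of $f=1_A\in L^\infty(\R^d,m)$ presumes that the weak-solution identity survives the $L^\infty$-approximation in Theorem \ref{1-3reg}, which is plausible (the $u_n$ are uniformly locally bounded and converge locally in $L^1$) but is precisely the step the paper's localization-plus-resolvent device is designed to make airtight; if you localize as the paper does, your final duality step becomes unnecessary, since Harnack already yields $m(A\cap B_r(x_0))=0$, a contradiction. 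Second, for part (ii) your route is genuinely different and more economical: you deduce it from a strengthened form of (i) (applied to $1_{A^c}$ and to $1-P_s1$) together with elementary monotonicity and semigroup identities, whereas the paper re-runs the Harnack/approximation argument at an arbitrary point $y$ with $g_n=nG_n1_{B\cap A}$ following \cite[Lemma 4.6(ii)]{LT18}. Your version buys a self-contained derivation of (ii) from (i); the paper's buys independence from the auxiliary claim that $P_tg(x_0)=0$ kills every nonnegative $g\in L^\infty(\R^d,m)$, which your scheme must first establish. The remaining small verifications you use (order preservation and sub-Markovianity of the continuous versions, full support of $m$ from $\rho>0$ continuous, $\widehat{T}_t\phi\to\phi$ in $L^1(\R^d,m)$) are all available in the paper's framework.
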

\begin{proof}
(i) Suppose $m(A)>0$. Choose an open ball $B_{r}(x_0)\subset \R^d$ such that
$$
0<m\left(A\cap B_{r}(x_0)\right)< \infty. 
$$
Let $u:=\rho P_{\cdot} 1_{A \cap B_{r}(x_0)}$. Then $0=u(x_0, t_0) \leq \rho(x_0) P_{t_0} 1_{A}(x_0)=0$. Set $f_n:=nG_{n}1_{A \cap B_{r}}$. Then $f_n \in  D(L_2) \cap D(L_q) \cap \mathcal{B}_b(\R^d)$ with $f_n \geq 0$ such that $f_n \rightarrow 1_{A \cap B_{r}(x_0)}$ in $L^1(\R^d, m)$. Let $u_n:=\rho P_{\cdot} f_n$. Fix $T>t_0$ and $U\supset \overline{B}_{r+1}(x_0)$. Since $u_n \in H^{1,2}(U \times (0,T))$ satisfies \eqref{**} (see proof of Theorem \ref{1-3reg}), \eqref{eqfhat} holds with $u$ replaced by $u_n$ for all $\varphi \in C_0^{\infty}(U\times (0, T))$. The rest of the proof is then exactly as in \cite[Lemma 4.6(i)]{LT18}.\\
(ii) Let $y \in \R^d$ and $0<s<t_0$ be arbitrary but fixed and let $r:=2\|x_0-y\|$ and let $B$ be any open ball. Take $g_n:=nG_n 1_{B \cap A}$. Then $g_n \in   D(L_2) \cap D(L_q) \cap \mathcal{B}_b(\R^d)$ with $0 \leq g_n \leq 1$ satisfying $g_n \rightarrow 1_{A \cap B}$ in $L^1(\R^d, m)$. The rest of the proof is now exactly as in \cite[Lemma 4.6 (ii)]{LT18}.
\end{proof}\\

\begin{rem}\label{irreduci}
Using the Lemma \ref{irreduci0}, \cite[Corollary 4,8]{LT18} holds in our more general situation with exactly the same proof.
\end{rem}
\begin{rem}\label{expg}
(i) (cf. Remark 4.5 in \cite{LT18}) Consider $A$, $C$, $\rho$, $\widetilde{\mathbf{B}}$ which are explicitly given by following assumptions. Let $A=(a_{ij})_{1 \leq i,j \leq d}$ be a matrix of functions as in assumption \emph{(a)} and $C=(c_{ij})_{1 \leq i,j \leq d}$ be a matrix of functions satisfying $c_{ij} =-c_{ij} \in H_{loc}^{1,2}(\R^d) \cap C(\R^d)$. Suppose that for some $p>d$, we are given $\rho \in H_{loc}^{1,p}(\R^d)\cap C(\R^d)$, $\rho(x)>0$ for all $x \in \R^d$, such that for some
$\widetilde{\mathbf{B}} \in L_{loc}^p(\R^d, \R^d)$ it holds
\begin{equation}\label{divfree20}
\int_{\R^d} \langle \widetilde {\mathbf{B}}, \nabla f  \rangle \rho dx = 0 \quad \text{ for all } f \in C_0^{\infty}(\R^d).
\end{equation}
Let
$$
\widetilde{L}f= L^0 f + \langle \beta^{\rho, C^{T}}+\widetilde{\mathbf{B}}, \nabla f \rangle, \;\;  f \in D(L^0)_{0,b}.
$$
Then \eqref{inv} holds for $L$ replaced with $\widetilde{L}$. Moreover, everything that was developed for $(L,D(L^0)_{0,b})$ right after Theorem \ref{eim} until and including Corollary \ref{irreduci} (and even beyond until the end of this article if additionally $\beta^{\rho, C^{T}}+\widetilde{\mathbf{B}} \in L_{loc}^q(\R^d, \R^d)$, i.e. assumption \emph{(b)} holds, cf. Remark \ref{mostresults}) holds analogously for $(\widetilde{L},D(L^0)_{0,b})$. 
Now suppose again that assumption \emph{(a)} holds. Then by Theorem \ref{eim}, there exists $\rho$ as right above such that $\widetilde{\mathbf{B}}:=\overline{\mathbf{B}}=\frac{1}{2}\nabla (A+C^{T})+ \mathbf{H}-\beta^{\rho, A+C^{T}}\in L_{loc}^p(\R^d, \R^d)$ and such that $\widetilde {\mathbf{B}}$ satisfies \eqref{divfree20}. Thus all that has been done up to now is in fact a special realization of the just explained explicit case. \\ \\
(ii) (cf. Remark 3.3  in \cite{LT18}) It is possible to realize the results of this article with $\R^d$ replaced by an arbitrary open set $U\subset \R^d$. Moreover as it is well-known the $L^p_{loc}$-condition can be relaxed by an $L^{p_n}_{loc}$-condition on an exhaustion $(V_n)_{n\in \N}$ of $\R^d$ (or $U$), where $p_n>d$ for all $n\in \N$ and $\lim_{n\to \infty} p_n=d$. 
\end{rem}

\section{Probabilistic results}\label{4}
\subsection{The underlying SDE}\label{three}
Additionally to assumption (a) we assume throughout this section that assumption (b) holds.  Then $C_0^2(\R^d) \subset D(L_1) \cap  D(L_q)$ and assumption ${(\textbf{H2})^{\prime}}$ of \cite{LT18} holds.
Here, assumption (b) was needed to get the continuity property of the resolvent in ${(\textbf{H2})^{\prime}}$(ii)  of \cite{LT18}. Thus, exactly as in \cite[Theorem 3.12]{LT18}, we arrive at  the following theorem:
\begin{theo}\label{existhunt}
There exists a Hunt process
\[
\M =  (\Omega, \F, (\F_t)_{t \ge 0}, (X_t)_{t \ge 0}, (\P_x)_{x \in \R^d\cup \{\Delta\}}   )
\]
with state space $\R^d$ and life time 
$$
\zeta=\inf\{t\ge 0\,:\,X_t=\Delta\}=\inf\{t\ge 0\,:\,X_t\notin \R^d\}, 
$$
having the transition function $(P_t)_{t \ge 0}$ as transition semigroup, such that $\M$ has continuous sample paths in the one point compactification $\R^d_{\Delta}$ of $\R^d$ with the cemetery $\Delta$ as point at infinity.
\end{theo}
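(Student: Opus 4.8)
The plan is to verify that the triple of hypotheses needed to invoke the construction machinery of \cite[Theorem 3.12]{LT18} is in force, and then to transfer the conclusion verbatim. Concretely, the construction of a Hunt process from a generalized Dirichlet form (or equivalently from a sub-Markovian resolvent with good regularity) requires three ingredients: (1) a functional-analytic frame, i.e.\ the $C_0$-semigroup $(T_t)_{t>0}$ on $L^1(\R^d,m)$ with generator extending $(L,C_0^\infty(\R^d))$, together with the generalized Dirichlet form $\mathcal{E}$, which we already have from Section \ref{one}; (2) strong-Feller-type regularity of the resolvent and semigroup, supplying continuous $m$-versions $R_\alpha g$ of $G_\alpha g$ and $P_t f$ of $T_t f$ with the estimates of Theorems \ref{4.1} and \ref{1-3reg} and Remark \ref{regular1}; and (3) the technical hypothesis ${(\textbf{H2})^\prime}$ of \cite{LT18}, whose part (i) is the inclusion $C_0^2(\R^d)\subset D(L_1)\cap D(L_q)$ and whose part (ii) is a continuity property of the resolvent on test functions. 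Steps (1) and (2) are already established in the excerpt; the text preceding the statement asserts that assumption (b) is exactly what upgrades (a) to ${(\textbf{H2})^\prime}$.

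First I would spell out why $C_0^2(\R^d)\subset D(L_1)\cap D(L_q)$: for $f\in C_0^2(\R^d)$, the expression $Lf=\tfrac12\,\mathrm{trace}(A\nabla^2 f)+\langle\mathbf{G},\nabla f\rangle$ makes pointwise sense, and under (a)+(b) we have $A\in C(\R^d)$ locally bounded and $\mathbf{G}=\tfrac12\nabla(A+C^T)+\mathbf{H}\in L^q_{loc}(\R^d,\R^d)$, so $Lf$ has compact support and lies in $L^q(\R^d,m)\subset L^1(\R^d,m)$; an approximation of $f$ by $C_0^\infty$ functions together with the closedness of $(L_1,D(L_1))$ and $(L_q,D(L_q))$ then places $f$ in both domains. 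Next I would check ${(\textbf{H2})^\prime}$(ii), which is the point where the improved integrability $q<p$ enters: one needs that for $g$ in a suitable dense class, $\alpha G_\alpha g\to g$ pointwise (or in an appropriate local sense) as $\alpha\to\infty$, with enough uniformity; this follows from the $H^{1,p}_{loc}$-bound of Theorem \ref{4.1} applied with $r=q$, which controls $\rho G_\alpha g$ in $H^{1,p}(B)$ by $\|g\|_{L^q(B',m)}+\|G_\alpha g\|_{L^1(B',m)}$, combined with the contraction property $\|G_\alpha g\|_{L^1}\le\alpha^{-1}\|g\|_{L^1}$ and a Sobolev embedding $H^{1,p}\hookrightarrow C^{0,1-d/p}$ valid precisely because $p>d$.

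With ${(\textbf{H2})^\prime}$ verified, the remainder is a direct citation: one defines the transition function by the continuous versions $P_t$, checks via Lemma \ref{irreduci0} and Remark \ref{regular1} (the analogue of \cite[Proposition 3.6]{LT18}) that $(P_t)_{t>0}$ is a sub-Markovian, strong Feller transition kernel consistent with $(T_t)_{t>0}$ on $L^1(\R^d,m)$, and then the general construction theorem of \cite{LT18} (Theorem 3.12 there), which is built on the theory of generalized Dirichlet forms and the strong Feller property, produces the Hunt process $\M$ with life time $\zeta$ and with $(P_t)_{t\ge0}$ as transition semigroup; the continuity of sample paths in $\R^d_\Delta$ is part of that construction, using that the coefficients are locally regular enough (the $H^{1,2}_{loc}\cap C$ hypothesis on $A,C$ and the local Krylov-type resolvent estimate) to rule out jumps inside $\R^d$.

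The main obstacle is step (3)(ii): establishing the resolvent-continuity half of ${(\textbf{H2})^\prime}$ with the correct local integrability bookkeeping, since this is exactly where the gap between $L^p_{loc}$ (available for $\mathbf{H}$ and $\nabla(A+C^T)$ separately) and $L^q_{loc}$ (the combined drift $\mathbf{G}$, hence the effective coefficient $\overline{\mathbf{B}}$) must be navigated so that Theorem \ref{4.1} applies and delivers equicontinuous, locally uniformly bounded versions of $\alpha G_\alpha g$; everything else is either already proved in the excerpt or a verbatim transcription of the argument in \cite[Theorem 3.12]{LT18}.
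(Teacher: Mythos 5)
Your proposal follows exactly the paper's route: the paper's entire argument is the paragraph preceding the statement, which notes that under (a) and (b) one has $C_0^2(\R^d)\subset D(L_1)\cap D(L_q)$ and that assumption ${(\textbf{H2})^{\prime}}$ of \cite{LT18} holds (with (b) supplying precisely the resolvent-continuity in ${(\textbf{H2})^{\prime}}$(ii), via Theorem \ref{4.1} and the embedding $H^{1,p}\hookrightarrow C^{0,1-d/p}$ for $p>d$), after which the theorem is obtained verbatim from \cite[Theorem 3.12]{LT18}. Your write-up just makes these verifications explicit, so it is correct and essentially identical in approach.
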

\begin{rem}\label{mostresults}
Actually, under assumptions \emph{(a)} and \emph{(b)} most of the results from \cite{LT18} generalize to the more general coefficients considered here, i.e. the analogues of Lemmas 3.14, 3.15, 3.18, Propositions 3.16, 3.17, Theorem 3.19, Remark 3.20 and the analogues of the results in Chapter 4 of \cite{LT18} hold. These results include, various non-explosion criteria, moment inequalities, a general Krylov type estimate, recurrence criteria and moreover (by combining our results with results of \cite{DPZB} and \cite{BKRS}, see \cite[Theorem 4.15, Proposition 4.17]{LT18}) criteria for ergodicity including uniqueness of the invariant probability measure $\rho dx$. 
\end{rem}
According to Remark \ref{mostresults}, we obtain:
\begin{theo}\label{weakexistence}
Consider the Hunt process $\M$ from Theorem \ref{existhunt} with coordinates $X_t=(X_t^1,\ldots,X_t^d)$. Let $(\sigma_{ij})_{1 \le i \le d,1\le j \le l}$, $l\in \N$ arbitrary but fixed, be any matrix consisting of continuous functions  $\sigma_{ij}\in C(\R^d)$ for all $i,j$,  such that $A=\sigma\sigma^T$,  i.e. 
$$
a_{ij}(x)=\sum_{k=1}^l \sigma_{ik}(x)\sigma_{jk}(x), \ \  \forall x\in \R^d, \ 1\le i,j\le d.
$$
Then on a standard extension 
of $(\Omega, \F, (\F_t)_{t\ge 0}, \P_x )$, $x\in \R^d$, that we denote for notational convenience again 
by $(\Omega, \F, (\F_t)_{t\ge 0}, \P_x )$, $x\in \R^d$, there exists a standard  $l$-dimensional Brownian motion $W = (W^1,\ldots,W^l)$ starting from zero such that 
$\P_x$-a.s. for any $x=(x_1,\ldots,x_d)\in \R^d$, $i=1,\ldots,d$
\begin{equation}\label{weaksolution}
X_t^i = x_i+ \sum_{j=1}^l \int_0^t \sigma_{ij} (X_s) \, dW_s^j +   \int^{t}_{0}   g_i(X_s) \, ds, \quad 0\le  t <\zeta.
\end{equation}
\end{theo}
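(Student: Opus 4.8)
The plan is to construct the driving Brownian motion from the continuous local martingale part of the semimartingale $X$, using the standard martingale-problem-to-SDE machinery. First I would verify that each coordinate process $t \mapsto X_t^i$ is a continuous semimartingale up to $\zeta$ under $\P_x$: since $C_0^2(\R^d) \subset D(L_1) \cap D(L_q)$ and assumption $\mathrm{(b)}$ holds, the analogue of the results in Chapter 4 of \cite{LT18} (in particular the Fukushima-type decomposition valid in the generalized Dirichlet form setting, cf. Remark \ref{mostresults}) gives, for $f \in C_0^2(\R^d)$,
\begin{equation*}
f(X_t) - f(X_0) = M_t^{[f]} + \int_0^t Lf(X_s)\, ds, \qquad 0 \le t < \zeta,
\end{equation*}
$\P_x$-a.s.\ for every $x$, where $M^{[f]}$ is a continuous local martingale. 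Applying this with $f$ equal (locally) to the coordinate projections $x \mapsto x_i$ and using $L x_i = g_i$ (from \eqref{defofL}), a localization along an exhaustion of $\R^d$ by relatively compact open sets produces continuous local martingales $M^i$ with $X_t^i = x_i + M_t^i + \int_0^t g_i(X_s)\, ds$ on $[0,\zeta)$. Here the Krylov-type estimate \eqref{resest}, available through Theorem \ref{4.1}, is what guarantees that the time integrals $\int_0^t g_i(X_s)\, ds$ and $\int_0^t a_{ij}(X_s)\, ds$ are $\P_x$-a.s.\ finite for $t < \zeta$, since $g_i \in L^q_{loc}$ and $a_{ij}$ is locally bounded.

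Next I would identify the covariation of the $M^i$. Applying the decomposition to $f = x_i x_j$ (localized) and to $f = x_i$, $f = x_j$, and using Itô's formula together with $L(x_i x_j) = a_{ij} + x_i g_j + x_j g_i$, one obtains
\begin{equation*}
\langle M^i, M^j \rangle_t = \int_0^t a_{ij}(X_s)\, ds, \qquad 0 \le t < \zeta.
\end{equation*}
Since $A = \sigma\sigma^T$ with $\sigma = (\sigma_{ij})$ continuous, this says $d\langle M^i, M^j\rangle_t = \sum_{k=1}^l \sigma_{ik}(X_t)\sigma_{jk}(X_t)\, dt$. I would then invoke the standard representation theorem for continuous local martingales (e.g.\ \cite[Ch.~II]{IW} / Karatzas--Shreve Thm.~3.4.2, after passing to a standard extension of the probability space to accommodate a possible rank deficiency of $\sigma$): there exists, on such an extension, a standard $l$-dimensional Brownian motion $W = (W^1,\dots,W^l)$ starting at zero with
\begin{equation*}
M_t^i = \sum_{j=1}^l \int_0^t \sigma_{ij}(X_s)\, dW_s^j, \qquad 0 \le t < \zeta.
\end{equation*}
Combining the two displays yields \eqref{weaksolution}.

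The main obstacle is making the stochastic-calculus arguments legitimate \emph{up to the (possibly finite) life time} $\zeta$ and for coefficients that are only locally integrable rather than bounded. This is handled by localization: one works on $\{t < \tau_n\}$ where $\tau_n$ is the exit time from the $n$-th set of an exhaustion, where on each such set $f$ can be taken in $C_0^2$, all coefficients are effectively controlled, and the integrability of the additive-functional time integrals follows from the Krylov estimate \eqref{resest}; one then lets $n \to \infty$ using $\tau_n \uparrow \zeta$ (continuity of paths in $\R^d_\Delta$, from Theorem \ref{existhunt}). A secondary technical point is the passage to a standard extension when $l > \mathrm{rank}\,\sigma(x)$ somewhere, or when $A$ degenerates; this is routine and is exactly the reason the statement allows an arbitrary $l$ and only asserts existence of $W$ on an extension. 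Since Remark \ref{mostresults} already asserts that the Chapter 4 results of \cite{LT18} carry over, the proof is really just the assembly of these ingredients, and I would present it as "exactly as in \cite{LT18}" for the decomposition step, spelling out only the identification of $\langle M^i, M^j\rangle$ and the martingale representation.
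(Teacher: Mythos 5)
Your proposal is correct and follows essentially the same route as the paper, which proves the theorem simply by invoking Remark \ref{mostresults}, i.e.\ the analogue of \cite[Theorem 3.19]{LT18}; the argument behind that citation is precisely the one you outline (localized decomposition $f(X_t)-f(X_0)=M^{[f]}_t+\int_0^t Lf(X_s)\,ds$ for coordinate functions, identification of $\langle M^i,M^j\rangle_t=\int_0^t a_{ij}(X_s)\,ds$ via $f=x_ix_j$, and the martingale representation theorem on a standard extension, with the Krylov-type estimate \eqref{resest} ensuring the time integrals are well defined for every starting point). No gaps.
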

The non-explosion result and moment inequality of order $1$ in the following theorem is new and allows for linear growth together with $L^q(\R^d,m)$-growth and singularities of the drift. However, the growth condition on the dispersion coefficient is unusually of square root order, but can allow $L^p(\R^d,m)$-growth.
The theorem complements various other non-explosion results from \cite{LT18} and existing literature. And it complements in particular \cite[Theorem 4.4]{LT18}, where a usual linear growth condition (that however does not allow for $L^q(\R^d,m)$-singularities of the drift) on dispersion and drift coefficients is used to show moment inequalities of orders $s>0$ and $s=2$. 
\begin{theo}\label{supestimate}
Let $\sigma=(\sigma_{ij})_{1\le i,j\le d}$ be as in Theorem \ref{weakexistence}, i.e. $l=d$ (such $\sigma$ always exists, cf. \cite[Lemma 3.18]{LT18}) and assume that for some $h_1 \in L^p(\R^d,m)$, $h_2 \in L^q(\R^d,m)$ and $C>0$ it holds for a.e. $x \in \R^d$
\begin{eqnarray*} \label{lineargrowth}
\max_{1\leq i,j \leq d}|\sigma_{ij}(x)| \leq |h_1(x)|+ C(\sqrt{\|x\|}+1), \quad \max_{1 \leq i \leq d}|g_i(x)| \leq |h_2(x)| + C(\|x\|+1).
\end{eqnarray*}
Then $\M$ is non-explosive and for any $T>0$, and any open ball $B$, there exist constants $C_{5,T}$, depending in particular on $B$, and $C_6$ such that
\[
\sup_{x \in \overline{B}}\E_{x}\left[\sup_{s \leq t} \|X_s \|\right] \leq  C_{5,T}\cdot e^{C_6 \cdot t}, \quad \forall t\le  T.
\]
\end{theo}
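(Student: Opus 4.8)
The plan is to run a Lyapunov argument with $V(x):=(1+\|x\|^2)^{1/2}\in C^2(\R^d)$, which satisfies $V\ge 1$, $\|x\|\le V(x)$, $\|\nabla V\|\le 1$, and $\text{trace}(A\nabla^2 V)=V^{-1}\text{trace}(A)-V^{-3}\langle Ax,x\rangle$. I would first record that, since $A=\sigma\sigma^{T}$ is symmetric nonnegative definite, the term $-\frac{1}{2}V^{-3}\langle Ax,x\rangle$ in $\frac{1}{2}\text{trace}(A\nabla^2 V)$ has the favorable sign $\le 0$, and that $|a_{ij}|\le d(\max_{k,l}|\sigma_{kl}|)^2\le 2d\,h_1^2+4dC^2(\|x\|+1)$ a.e. Combining these with $\max_i|g_i|\le |h_2|+C(\|x\|+1)$ and with $\|x\|\le V(x)$, a direct computation produces a constant $C_1=C_1(d,C)$ with
$$
|LV(x)|\le C_1\big(h_1^2(x)+|h_2(x)|+V(x)+1\big),\qquad \langle A(x)\nabla V(x),\nabla V(x)\rangle\le C_1\big(h_1^2(x)+V(x)+1\big)\quad\text{a.e. }x,
$$
where $LV=\frac{1}{2}\text{trace}(A\nabla^2 V)+\langle\mathbf{G},\nabla V\rangle$. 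The decisive point is that $h_1\in L^p(\R^d,m)$ forces $h_1^2\in L^{p/2}(\R^d,m)$ with $p/2>q$ (because $p>d$, so $p+d\ge 2d$), so that $h_1^2$ and $h_2$ both belong to an $L^r(\R^d,m)$-class with $r\in[q,\infty]$, to which the Krylov type estimate \eqref{resest} (available by Remark \ref{mostresults}) applies, with $\|h_1^2\|_{L^{p/2}(\R^d,m)}=\|h_1\|_{L^p(\R^d,m)}^2$.

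Next I would localize by the exit times $\tau_n:=\inf\{t\ge 0:\|X_t\|\ge n\}$, so that $\tau_n\uparrow\zeta$ and $X$ stays in $B_n$ on $[0,\tau_n)$. On $[0,t\wedge\tau_n]$ the process $X$ is a continuous semimartingale by \eqref{weaksolution}, its drift part being of finite variation thanks to \eqref{resest}; hence It\^o's formula applied to $V\in C^2(\R^d)$ together with $A=\sigma\sigma^{T}$ gives that
$$
N_t:=V(X_{t\wedge\tau_n})-V(x)-\int_0^{t\wedge\tau_n}LV(X_s)\,ds=\int_0^{t\wedge\tau_n}\langle\nabla V(X_s),\sigma(X_s)\,dW_s\rangle
$$
is a martingale with $\langle N\rangle_t=\int_0^{t\wedge\tau_n}\langle A\nabla V,\nabla V\rangle(X_s)\,ds$; moreover $\int_0^{t\wedge\tau_n}|LV(X_s)|\,ds<\infty$ $\P_x$-a.s.\ by the first displayed bound, the boundedness of $V$ on $B_n$ and \eqref{resest}. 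Taking $\sup_{s\le t\wedge\tau_n}$, then $\E_x$, applying the Burkholder--Davis--Gundy inequality to $N$, and splitting the right-hand sides via the two displayed bounds, \eqref{resest} (with $r=p/2$ for $h_1^2$ and $r=q$ for $h_2$) and Jensen's inequality, I obtain for $\phi_n(t):=\E_x\big[\sup_{s\le t\wedge\tau_n}(V(X_s)+1)\big]$ — which is a priori $\le(1+n^2)^{1/2}+1<\infty$ — an inequality of the shape
$$
\phi_n(t)\le V(x)+1+\kappa_{B,T}\big(1+\|h_1\|_{L^p(\R^d,m)}^2+\|h_2\|_{L^q(\R^d,m)}\big)+\frac{1}{2}\phi_n(t)+C_2\,t+C_3\int_0^t\phi_n(s)\,ds,
$$
valid for all $x\in\overline{B}$ and $t\le T$, where the $\frac{1}{2}\phi_n(t)$ term arises by applying Young's inequality to the BDG bound, which after Jensen is of the order $c\,t^{1/2}\phi_n(t)^{1/2}$ (recall that $\phi_n$ is nondecreasing), and $C_2,C_3,\kappa_{B,T}$ are explicit with $C_2,C_3$ depending only on $d,C$. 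Absorbing $\frac{1}{2}\phi_n(t)$ to the left and invoking Gronwall's lemma yields $\phi_n(t)\le \widetilde{C}_{B,T}\,e^{C_6 t}$ for all $t\le T$, with $\widetilde{C}_{B,T}$ depending on $B,T,\|h_1\|_{L^p(\R^d,m)},\|h_2\|_{L^q(\R^d,m)},d,C,\rho$ and $C_6:=2C_3$ depending only on $d,C$, both \emph{independent of $n$}.

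From here the conclusion is routine. On $\{\tau_n\le t\}$ one has $\sup_{s\le t\wedge\tau_n}V(X_s)\ge(1+n^2)^{1/2}$, so $(1+n^2)^{1/2}\,\P_x(\tau_n\le t)\le\phi_n(t)\le\widetilde{C}_{B,T}e^{C_6 t}$, whence $\P_x(\zeta\le t)=\lim_n\P_x(\tau_n\le t)=0$ for every $t>0$; since $B$ is an arbitrary ball, $\M$ is non-explosive. Then $\tau_n\uparrow\infty$ and path-continuity give $\sup_{s\le t\wedge\tau_n}(V(X_s)+1)\uparrow\sup_{s\le t}(V(X_s)+1)$, so by monotone convergence and $\|x\|\le V(x)$,
$$
\sup_{x\in\overline{B}}\E_x\Big[\sup_{s\le t}\|X_s\|\Big]\le\sup_{x\in\overline{B}}\E_x\Big[\sup_{s\le t}(V(X_s)+1)\Big]\le\widetilde{C}_{B,T}\,e^{C_6 t}=:C_{5,T}\,e^{C_6 t},\qquad t\le T,
$$
which is the asserted estimate. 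I expect the principal difficulty to be the careful justification of the It\^o/semimartingale decomposition for the non-compactly supported $C^2$ function $V$ under a merely $L_{loc}^q$ drift — precisely where \eqref{resest} is needed to make both the finite-variation part and the bracket $\langle N\rangle$ well defined (a $C_0^2$-localization of $V$ combined with $C_0^2(\R^d)\subset D(L_1)\cap D(L_q)$ is an equivalent route) — together with the bookkeeping that the exponent $p/2$ produced by $h_1^2$ stays inside the admissible range $[q,\infty]$ of the Krylov estimate and that the quadratic-variation contribution can, after Young's inequality, be absorbed into $\phi_n(t)$.
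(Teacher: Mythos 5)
Your proposal is correct and follows essentially the same route as the paper: localization at the exit times of $B_n$, the Burkholder--Davis--Gundy inequality for the martingale part, the Krylov-type estimate \eqref{resest} to control the $L^{p}$/$L^{q}$-singular parts (with $h_1^2$ landing in an admissible exponent $\ge q$), Gronwall's lemma with an $n$-independent constant, a Chebyshev/Markov bound for non-explosion, and Fatou/monotone convergence for the moment inequality. The only difference is cosmetic: you run the estimate through It\^o's formula for the Lyapunov function $V(x)=(1+\|x\|^2)^{1/2}$, whereas the paper works coordinatewise directly on the representation \eqref{weaksolution}.
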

\begin{proof}
Let $x \in \overline{B}$ and $n \in \N$ such that $x \in B_n$ ($B_n$ is the open ball about zero with radius $n$ in $\R^d$). Let $0 \leq t \leq T$. 
Then with  $\sigma_n:=\inf\{t>0\,:\, X_t\in \R^d\setminus B_n\}$, $n\ge 1$, we obtain $\P_{x}$-a.s. for any $1\le i\le d$
\begin{eqnarray*} 
 \sup_{0 \leq s \leq t \wedge \sigma_{n}} |X_s^i|
\leq  |x_i| + \sum_{j=1}^d \sup_{\;0 \leq s \leq t \wedge \sigma_{n} } \left |\int_0^{s} \sigma_{ij} (X_u) \, dW_u^j \right | + \sup_{\;0 \leq s \leq t \wedge \sigma_{n} }  \int^{ s}_{0}   |g_i(X_u)| \, du.
\end{eqnarray*}
By the Burkholder-Davis-Gundy inequality \cite[Chapter IV. (4.2) Corollary]{RYor99} and \eqref{resest}, there exists a constant $C_{3,T}$, depending on $\|h_1 \|_{L^{2q}(\R^d, m)}$ and $B$, and constants $c$, $C$, such that
\begin{eqnarray} \label{sdeest2}
&& \sum_{j=1}^{d} \E_x \left [ \sup_{\;0 \leq s \leq t \wedge \sigma_{n} } \left |\int_0^{s} \sigma_{ij} (X_u) \, dW_u^j \right |   \right ] \leq \sum_{j=1}^{d} c    \E_x \left [  \int_0^{t \wedge  \sigma_{n} } \sigma_{ij}^2(X_{u })  du   \right ]^{1/2}  \nonumber \\
&\leq&  C_{3,T}+ C\sqrt{3}cd  \int_0^ t \E_{x} \left[ \sup_{0 \leq s \leq u \wedge \sigma_n} \|X_s\| \right] du, \nonumber
\end{eqnarray}
and 
\begin{eqnarray*}
&&\E_{x}\left[\sup_{0 \leq s \leq t \wedge \sigma_{n}}\int_0^{s} |g_i(X_u)|du\right] \\
 &\leq&  \underbrace{e^T c_{B,q,T} \|h_2 \|_{L^q(\R^d, m)}+CT}_{=:C_{4,T}}+  C\int_0^{t }\E_{x} \left[  \sup_{0 \leq s \leq u \wedge \sigma_n} \|X_s\|\right] du.
\end{eqnarray*}
Hence, for some constants $C_{5,T}$ and $C_6$
\begin{eqnarray*}
\E_x\left[\sup_{0 \leq s \leq t \wedge \sigma_n}\|X_s\| \right]  
&\leq& C_{5,T}+ C_6 \int_0^{t }\E_{x} \left[  \sup_{0 \leq s \leq u \wedge \sigma_n} \|X_s\|\right] du. \label{gwi}
\end{eqnarray*}
Now let $p_n(t):=\E_{x} \left[  \sup_{0 \leq s \leq t\wedge \sigma_{n}}\|X_{s}\| \right]$. Then by \eqref{gwi}, we obtain
\begin{eqnarray*}
p_n(t) & \leq &C_{5,T}+ C_6\int_{0}^{t} p_n(u) du, \;\; \quad 0\leq t \leq T.
\end{eqnarray*}
By Gronwall's inequality, $p_n(t) \leq C_{5,T}\cdot e^{C_6 \cdot t}$ for any $t \in [0,T]$. Using in particular the Markov inequality,
\begin{eqnarray*}
\P_x(\sigma_n \leq T)
\leq \frac{1}{n} C_{5,T}\cdot e^{C_6 \cdot T}.
\end{eqnarray*}
Therefore, letting $n\to \infty$ and using the analogue of Lemma 3.15(i) in \cite{LT18} (cf. Remark \ref{mostresults}), we obtain $\P_{x}(\zeta= \infty)=1$. Finally applying Fatou's lemma to $p_n(t)$, we obtain
\[
\E_{x}\left[\sup_{s \leq t} \|X_s \|\right] \leq  C_{5,T}\cdot e^{C_6 \cdot t}, \quad \forall t\le  T.
\]
Since the right hand side does not depend on $x\in \overline{B}$ the assertion follows.
\end{proof}
\begin{exam}\label{infsin}
Let $\eta \in C_0^{\infty}(B_{1/4})$ be given.
Define $w:\R^d \rightarrow \R$ by
$$
w(x_1, \dots, x_d):= \eta(x_1, \dots, x_d) \cdot  \int_{-2}^{x_1} \frac{1}{|y_1|^{1/d}} 1 _{[-1,1]}(y_1) dy_1.
$$
Then $w \in H^{1,q}(\R^d) \cap C_0(B_{1/4})$ but $\partial_1 w \notin L_{loc}^{d}(\R^d)$. Define $v:\R^d \rightarrow \R$ by
$$
v(x_1, \dots, x_d):= w(x_1, \dots, x_d)+ \sum_{i=1}^{\infty} \frac{1}{2^i} w(x_1-i, \dots, x_d)
$$
Then $v \in H^{1,q}(\R^d) \cap C(\R^d)$ but $\partial_1 v \notin L_{loc}^{d}(\R^d)$. Now define $P=(p_{ij})_{1 \leq i, j \leq d}$ as
$$
p_{1d}:=v, \;\; p_{d1}:=-v,\;\quad  p_{ij}:=0 \text{ if }\;(i,j) \notin \{(1,d), (d,1)\}.
$$
Let $Q=(q_{ij})_{1 \leq i, j \leq d}$ be a matrix of functions such that $q_{ij}=-q_{ij} \in H^{1,q}_{loc}(\R^d) \cap C(\R^d)$ for all $1\leq i,j \leq d$ and assume there exists a constant $C>0$ satisfying
$$
\| \nabla Q \| \leq C( \|x\|+1), \quad \text{ for a.e. on } \R^d. 
$$
Let $A:=id$, $C:=P+Q$ and $\mathbf{H}\equiv0$. Then $A$ and $C$ satisfy assumption \emph{(a)} with $\mathbf{G}:= \frac12 \nabla (A+C^{T})$ and assumption \emph{(b)} is satisfied. Define $\rho\equiv 1$ on $\R^d$. Then $\rho$ satisfies \eqref{inv} and $\overline{\mathbf{B}}\equiv0$. Obviously $\sigma=id$ and $\mathbf{G}$ satisfy the conditions of Theorem \ref{supestimate}. Thus $\M$ from Theorem  \ref{existhunt} is non-explosive. Note that the non-explosion criterion of this example can not be  derived from  \cite[Proposition 1.10]{St99}, nor from \cite[(3)]{LT18} or for instance \cite[Assumption 2.1]{GyMa} (one of the pioneering works on local and global well-posedness of SDEs with unbounded merely measurable drifts), since $\mathbf{G}$ has a part with infinitely many singular points outside an arbitrarily large compact set and may have a part with linear growth. 
\end{exam}

\subsection{Uniqueness in law under low regularity}\label{4.2}
Let $\widetilde \M =  (\widetilde\Omega, \widetilde \F, (\widetilde X_t)_{t \ge 0}, (\widetilde\P_x)_{x \in \R^d\cup \{\Delta\}})$ be a right process (see for instance \cite{Tr5}).  For a $\sigma$-finite or finite Borel measure $\nu$ on $\R^d$ we define
$$
\widetilde \P_\nu(\cdot):=\int_{\R^d} \widetilde \P_x(\cdot)\,\nu(dx).
$$ 
Consider $(L,C_0^{\infty}(\R^d))$ as defined in  \eqref{defofL}. According to \cite[Definition 2.5]{St99}, we define:
\begin{defn}
A right process $\widetilde \M =  (\widetilde\Omega, \widetilde \F, (\widetilde X_t)_{t \ge 0}, (\widetilde\P_x)_{x \in \R^d\cup \{\Delta\}})$  with state space $\R^d$ and natural filtration $(\widetilde{\F}_t)_{t \ge 0}$ is said to solve the martingale problem for $(L,C_0^{\infty}(\R^d))$, if for all $u\in C_0^{\infty}(\R^d)$:
\begin{itemize}
\item[(i)] $\int_0^t L u(\widetilde X_s) \, ds$, $t \ge 0$, is $\widetilde\P_m$-a.e. independent of the measurable $m$-version chosen for $Lu$.
\item[(ii)] $u(\widetilde X_t) - u(\widetilde X_0) - \int_0^t L u(\widetilde X_s) \, ds$, $t \ge 0$,
is  a continuous $(\widetilde{\F}_t)_{t \ge 0}$-martingale under $\widetilde\P_{vm}$  for any $v \in \mathcal{B}_b^+(\R^d)$ such that $\int_{\R^d}v\,dm=1$.
\end{itemize}
\end{defn}
\begin{defn}\label{subinvariant}
A $\sigma$-finite Borel measure $\nu$ on $\R^d$ is called sub-invariant measure for a right process $\widetilde \M =  (\widetilde\Omega, \widetilde \F, (\widetilde X_t)_{t \ge 0}, (\widetilde\P_x)_{x \in \R^d\cup \{\Delta\}})$  with state space $\R^d$, if
\begin{eqnarray}\label{le}
\int_{\R^d} \widetilde\E_x[f(\widetilde X_t)] \nu(dx)\le \int_{\R^d} f(x) \nu(dx)
\end{eqnarray}
for any $f\in L^1(\R^d,\nu)\cap \mathcal{B}_b(\R^d)$, $f\ge 0$, $t\ge 0$. $\nu$ is called invariant measure for $\widetilde \M$, if \lq\lq $\le$\rq\rq \ can be replaced by \lq\lq $=$\rq\rq\ in \eqref{le}
\end{defn}
Part (i) of the following proposition is proven in \cite[Proposition 2.6]{St99}. And part (ii) is a simple consequence of part (i), the strong Feller property of $(p^{\M}_t)_{t\ge 0}$, $\M$ as in Theorem \ref{existhunt}, and the fact that the law of a right process is uniquely determined by its transition function (and the initial condition).
\begin{prop}\label{wilhelm}
\begin{itemize}
\item[(i)] Let $\widetilde \M =  (\widetilde\Omega, \widetilde \F, (\widetilde X_t)_{t \ge 0}, (\widetilde\P_x)_{x \in \R^d\cup \{\Delta\}})$ 
solve the martingale problem for $(L,C_0^{\infty}(\R^d))$ such that $m$ is a sub-invariant measure for $\widetilde\M$ and let  $(L,C_0^{\infty}(\R^d))$ be $L^1$-unique. Then $p^{\widetilde\M}_t f(x):=\widetilde\E_x[f(\widetilde X_t)]$ is an $m$-version of $T_tf$ for all $f\in L^1(\R^d,m)\cap \mathcal{B}_b(\R^d)$, $t\ge 0$ and $m$ is an invariant measure for $\widetilde \M$. 
\item[(ii)] If additionally to the assumptions in (i), $(p^{\widetilde\M}_t)_{t\ge 0}$ is strong Feller, then $\widetilde \P_x=\P_x$ for any $x\in \R^d$.
\end{itemize}
\end{prop}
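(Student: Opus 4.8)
The plan is to establish~(i) essentially as in \cite[Proposition 2.6]{St99} and then to deduce~(ii) from~(i) by a strong Feller / monotone class argument.

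\emph{Part (i).} Given a right process $\widetilde\M$ solving the martingale problem for $(L,C_0^\infty(\R^d))$ for which $m$ is sub-invariant, I would first observe that the kernels $p_t^{\widetilde\M}f(x):=\widetilde\E_x[f(\widetilde X_t)]$, $f\in\mathcal{B}_b(\R^d)\cap L^1(\R^d,m)$, $f\ge 0$, satisfy $\int_{\R^d}p_t^{\widetilde\M}f\,dm\le\int_{\R^d}f\,dm$ by \eqref{le}, hence extend to a semigroup $(p_t^{\widetilde\M})_{t\ge 0}$ of sub-Markovian contractions on $L^1(\R^d,m)$. Testing part~(ii) of the martingale problem against the initial distributions $vm$, $v\in\mathcal{B}_b^+(\R^d)$ with $\int_{\R^d}v\,dm=1$, using part~(i) of the martingale problem (which makes $\int_0^tLu(\widetilde X_s)\,ds$ well-defined up to the $m$-version) together with Fubini's theorem, one gets for every $u\in C_0^\infty(\R^d)$ the identity $p_t^{\widetilde\M}u-u=\int_0^t p_s^{\widetilde\M}(Lu)\,ds$ in $L^1(\R^d,m)$, $t\ge 0$. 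This shows that $(p_t^{\widetilde\M})_{t\ge 0}$ is strongly continuous on $L^1(\R^d,m)$ and that its generator is a closed extension of $(L,C_0^\infty(\R^d))$ generating a $C_0$-semigroup of sub-Markovian contractions; by $L^1$-uniqueness it must coincide with $(T_t)_{t\ge 0}$, so $p_t^{\widetilde\M}f$ is an $m$-version of $T_tf$. For invariance of $m$ I would use that $L^1$-uniqueness makes $C_0^\infty(\R^d)$ a core for $(L_1,D(L_1))$, so that \eqref{inv} extends by closure to $\int_{\R^d}L_1g\,dm=0$ for all $g\in D(L_1)$; integrating $\frac{d}{ds}\int_{\R^d}T_sf\,dm=\int_{\R^d}L_1(T_sf)\,dm=0$ over $[0,t]$ for $f\in D(L_1)$ and passing to the limit gives $\int_{\R^d}p_t^{\widetilde\M}f\,dm=\int_{\R^d}f\,dm$, i.e. $m$ is invariant for $\widetilde\M$.

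\emph{Part (ii).} Here I would combine~(i) with the strong Feller property of $(P_t)_{t>0}$ from Theorem \ref{1-3reg} (which provides the continuous $m$-version $P_tf=p_t^{\M}f$ of $T_tf$, $\M$ as in Theorem \ref{existhunt}) and the assumed strong Feller property of $(p_t^{\widetilde\M})_{t\ge 0}$. Fix $t>0$ and a Borel set $A$ with $m(A)<\infty$. By~(i), $p_t^{\widetilde\M}1_A=T_t1_A=P_t1_A=p_t^{\M}1_A$ $m$-a.e.; both $p_t^{\widetilde\M}1_A$ (strong Feller of $\widetilde\M$) and $p_t^{\M}1_A$ (Theorem \ref{1-3reg}) are continuous on $\R^d$, and since $m=\rho\,dx$ with $\rho>0$ everywhere, $m$ is equivalent to Lebesgue measure and charges every nonempty open set, so two continuous functions agreeing $m$-a.e. agree everywhere; thus $p_t^{\widetilde\M}1_A(x)=p_t^{\M}1_A(x)$ for all $x\in\R^d$. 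Running $A$ through the $\pi$-system of Borel subsets of a fixed ball $B_n$ — noting $p_t^{\widetilde\M}(x,B_n)=p_t^{\M}(x,B_n)$ — a Dynkin class argument gives $p_t^{\widetilde\M}(x,\cdot)=p_t^{\M}(x,\cdot)$ on $\mathcal{B}(B_n)$ for every $x$ and $n$, and letting $n\to\infty$ by monotone convergence yields $p_t^{\widetilde\M}(x,\cdot)=p_t^{\M}(x,\cdot)$ for all $x\in\R^d$, $t\ge 0$ (the case $t=0$ being trivial). Since the law of a right process is determined by its transition function and starting point, the finite-dimensional distributions of $\widetilde\M$ under $\widetilde\P_x$ and of $\M$ under $\P_x$ coincide, whence $\widetilde\P_x=\P_x$ for all $x\in\R^d$.

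\emph{Main obstacle.} For~(i) the delicate point is the functional-analytic work behind \cite[Proposition 2.6]{St99}: verifying strong continuity of $(p_t^{\widetilde\M})_{t\ge 0}$ on $L^1(\R^d,m)$ and that its $L^1$-generator genuinely extends $(L,C_0^\infty(\R^d))$ — this is exactly where the careful formulation of the martingale problem (the $m$-independence in part~(i) and the class of initial measures $vm$ in part~(ii)) is essential. For~(ii) there is no analytic difficulty once the two strong Feller properties are in hand; the only step requiring care is the passage from $m$-a.e. equality of the continuous functions $p_t^{\widetilde\M}1_A$ and $p_t^{\M}1_A$ to equality of the transition kernels, which is handled by the Dynkin/monotone-class argument above together with the full support of $m$.
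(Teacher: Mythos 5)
Your proposal is correct and follows essentially the same route as the paper: part (i) is exactly the content of \cite[Proposition 2.6]{St99}, which the paper simply cites and which you reconstruct faithfully (sub-Markovian $L^1$-contraction semigroup from sub-invariance, generator extending $(L,C_0^\infty(\R^d))$ via the martingale problem, identification with $(T_t)_{t\ge0}$ by $L^1$-uniqueness, and invariance via the core property). Part (ii) is precisely the paper's intended argument: the two strong Feller properties plus the full support of $m=\rho\,dx$ upgrade $m$-a.e.\ equality of the transition kernels to everywhere equality, and the law of a right process is determined by its transition function and starting point.
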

\begin{prop}\label{uniquenesslawgen}
Suppose that assumptions \emph{(a)} and \emph{(b)} hold, and that for any compact set $K$ in $\R^d$, there exist $L_K\ge 0, \alpha_K\in (0,1)$ with
\begin{eqnarray*}
|a_{ij}(x)   -a_{ij}(y)|\le L_K\|x-y\|^{\alpha_K} ,\quad        \forall  x,y\in K,\,1\le i,j\le d.
\end{eqnarray*}
Suppose further that $m$ is an invariant measure for $\M$. Let $\widetilde \M$ be a right process with strong Feller transition function $(p^{\widetilde\M}_t)_{t\ge 0}$ that  solves the martingale problem for $(L,C_0^{\infty}(\R^d))$ and such that $m$ is a sub-invariant measure for $\widetilde\M$. Then $\widetilde \P_x=\P_x$ for any $x\in \R^d$.
\end{prop}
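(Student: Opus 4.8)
The plan is to reduce the whole statement to the $L^1(\R^d,m)$-uniqueness of $(L,C_0^\infty(\R^d))$, and then to derive that uniqueness from the extra local H\"older continuity of $A$ together with the genuine invariance of $m$. The reduction is immediate: by hypothesis $\widetilde\M$ solves the martingale problem for $(L,C_0^\infty(\R^d))$, the transition function $(p^{\widetilde\M}_t)_{t\ge 0}$ is strong Feller, and $m$ is sub-invariant for $\widetilde\M$; hence, once we know that $(L,C_0^\infty(\R^d))$ is $L^1$-unique, Proposition \ref{wilhelm}(ii) applies verbatim and gives $\widetilde\P_x=\P_x$ for every $x\in\R^d$. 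Concretely, Proposition \ref{wilhelm}(i) forces $p^{\widetilde\M}_tf$ to be the $m$-version of $T_tf$, hence to agree $m$-a.e.\ with the continuous representative $P_tf$ of $T_tf$ supplied by Theorem \ref{1-3reg}; since both functions are continuous they coincide pointwise, so $\widetilde\M$ and $\M$ have identical transition functions and therefore identical laws for each fixed starting point. Thus the entire work is in proving $L^1$-uniqueness.

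For that, recall that $(\overline L,D(\overline L))$ was constructed via \cite[Theorem 1.5]{St99}, so $(L,C_0^\infty(\R^d))$ is $L^1$-unique if and only if $(\alpha-L)(C_0^\infty(\R^d))$ is dense in $L^1(\R^d,m)$ for one (equivalently all) $\alpha>0$; by Hahn--Banach this fails only if there is a nonzero $u\in L^\infty(\R^d,m)$ with $\int_{\R^d}(\alpha\varphi-L\varphi)\,u\,dm=0$ for all $\varphi\in C_0^\infty(\R^d)$, i.e.\ a bounded weak solution of $\alpha u=\widehat L u$, where $\widehat L$ is the co-operator introduced before Section \ref{two}. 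Since $\widehat L$ shares the elliptic-plus-divergence-free structure of $L$, the regularity results of Section \ref{two} (Theorems \ref{4.1} and \ref{1-3reg}) apply to it as well; and the new hypothesis that $A$ is locally H\"older continuous upgrades this interior regularity further: combining Schauder and $L^p$-estimates with the Harnack inequality (exactly as it was used for $\rho$ in the proof of Theorem \ref{eim}) and with $\rho\in H^{1,p}_{loc}(\R^d)\cap C(\R^d)$, $\rho>0$, one obtains that $u$ has a version in $C^1_{loc}(\R^d)$. This is enough regularity to run a Dynkin-type identity for the Hunt process $\widehat\M$ associated with $\widehat L$ and to write $u(x)=\widehat\E_x\!\left[e^{-\alpha t}u(\widehat X_t);\,t<\widehat\zeta\right]$ for all $t>0$ and all $x\in\R^d$.

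Finally, the hypothesis that $m$ is \emph{invariant} (not merely sub-invariant) for $\M$ enters here: $\int_{\R^d}T_tf\,dm=\int_{\R^d}f\,dm$ for all bounded $f\ge 0$ in $L^1(\R^d,m)$ is equivalent to $\widehat T_t1=1$, i.e.\ to conservativeness of the co-process, $\widehat\zeta=\infty$ a.s.; letting $t\to\infty$ in the Dynkin identity (with $|u|$ bounded) then forces $u\equiv 0$, so $(L,C_0^\infty(\R^d))$ is $L^1$-unique and the first paragraph concludes the proof. I expect the $L^1$-uniqueness step to be the main obstacle, and it is precisely there that both additional assumptions pull their weight: local H\"older continuity of $A$ buys just enough interior regularity of a candidate bounded solution $u$ for the probabilistic representation to be legitimate, while invariance of $m$ excludes the escape of mass at infinity for the co-dynamics that would otherwise allow a nonzero bounded $\widehat L$-harmonic-type function to persist.
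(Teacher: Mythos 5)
Your first paragraph reproduces the paper's reduction exactly: the published proof consists of the single observation that, by \cite[Corollary 2.2]{St99}, $(L,C_0^{\infty}(\R^d))$ is $L^1$-unique if and only if $m$ is an invariant measure for $\M$ (the local H\"older continuity of the $a_{ij}$ is there to satisfy the hypotheses of that corollary), followed by an appeal to Proposition \ref{wilhelm}. So the overall strategy is right. The problem is that you then try to \emph{reprove} the implication ``$m$ invariant $\Rightarrow$ $L^1$-uniqueness'' instead of citing it, and that part of your argument has a genuine flaw.

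The flaw is the unconditional identity $u(x)=\widehat\E_x\big[e^{-\alpha t}u(\widehat X_t);\,t<\widehat\zeta\big]$. If this held for every $t>0$ as a consequence of $u$ being a bounded weak solution of $\alpha u=\widehat L u$, then $|u(x)|\le e^{-\alpha t}\|u\|_{L^\infty}$ for all $t$ would force $u\equiv 0$ with no use of conservativeness at all; $L^1$-uniqueness would then always hold, which is false (for an explosive co-process, $u(x)=\widehat\E_x[e^{-\alpha\widehat\zeta}]$ is a nonzero bounded $\alpha$-harmonic function). What a Dynkin/It\^o argument actually yields is the localized identity $u(x)=\widehat\E_x\big[e^{-\alpha(t\wedge\tau_n)}u(\widehat X_{t\wedge\tau_n})\big]$, $\tau_n$ the exit time from $B_n$, and conservativeness of $\widehat\M$ (equivalently, invariance of $m$ for $\M$) is needed precisely to let $n\to\infty$ and kill the boundary contribution on $\{\widehat\zeta\le t\}$ — that is, to \emph{establish} your identity, not to ``let $t\to\infty$'' afterwards. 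As written, the argument mislocates where the crucial hypothesis enters and, taken literally, proves too much. In addition, the claim that $u$ has a $C^1_{loc}$ version ``by Schauder and $L^p$ estimates'' is unsubstantiated: $u$ solves the formal adjoint (divergence-form) equation with drift only in $L^p_{loc}$, the paper's machinery delivers $H^{1,p}_{loc}$-type regularity for $u\rho$ rather than $C^1_{loc}$ for $u$, and $C^1_{loc}$ would in any case not suffice to apply It\^o's formula; one would need $H^{2,p}_{loc}$ or a resolvent-based substitute. The clean repair is the paper's own route: invoke \cite[Corollary 2.2]{St99} for the equivalence and then Proposition \ref{wilhelm}(ii).
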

\begin{proof} By \cite[Corollary 2.2]{St99} $(L,C_0^{\infty}(\R^d))$ is $L^1$-unique, if and only if $m$ is an invariant measure for $\M$. Then apply Proposition \ref{wilhelm}.
\end{proof}
\begin{rem}
Note that $m$ is an invariant measure for $\M$ as in Theorem \ref{existhunt}, if and only if the co-semigroup $(\widehat{T}_t)_{t>0}$  of  $(T_t)_{t>0}$ is conservative. One advantage of our approach is that we can use all previously derived conservativeness results for generalized Dirichlet forms (see for instance \cite[Proposition 1.10]{St99}, \cite{GT17}, \cite{LT18}, but also Example \ref{finalex}).
\end{rem}
\begin{exam}\label{finalex}
\begin{itemize}
\item[(i)]
Assume that assumptions \emph{(a)}  and \emph{(b)} hold and that the $a_{ij}$ are locally H\"{o}lder continuous on $\R^d$ as in Proposition \ref{uniquenesslawgen}. If there exists a constant  $C> 0$ and some $N_0\in \N$, such that 
\begin{eqnarray}\label{conservative9}
-\frac{\langle A(x)x, x \rangle}{ \left \| x \right \|^2 +1}+ \frac12\mathrm{trace}A(x)+ \big \langle \mathbf{G}(x), x \big \rangle \leq -C\left ( \left \| x \right \|^2+1\right )
\end{eqnarray}
for a.e. $x\in \R^d\setminus B_{N_0}$, then $\M$ as in Theorem \ref{existhunt} solves the martingale problem for $(L,C_0^{\infty}(\R^d))$ and $m$ is an invariant measure for $\M$ by the analogue of \cite[Proposition 4.17]{LT18} (see Remark \ref{mostresults}). In this situation Proposition \ref{uniquenesslawgen} applies.
\item[(ii)]
Let $A$, $C$ and $\mathbf{G}$ be as in Example \ref{infsin}. By Theorem \ref{supestimate}, not only $\M$ but also its co-process $\widehat{\M}$ is non-explosive. Hence $dx$ is an invariant measure for $\M$. Now if $a_{ij}$ are locally H\"{o}lder continuous on $\R^d$ as in Proposition \ref{uniquenesslawgen} then Proposition \ref{uniquenesslawgen} also applies.
\item[(iii)] Suppose that in the situation of Remark \ref{expg}(i) the conditions of \cite[Theorem 4.11]{LT18} hold with $\overline{\mathbf{B}}=\widetilde{\mathbf{B}}$ and that the $a_{ij}$ are locally H\"{o}lder continuous on $\R^d$ as in Proposition \ref{uniquenesslawgen}. Then $\rho
\, dx$ is an invariant measure for $\M$ and Proposition \ref{uniquenesslawgen} again applies.
\end{itemize}
\end{exam}

\centerline{}
Haesung Lee, Gerald Trutnau\\
Department of Mathematical Sciences and \\
Research Institute of Mathematics of Seoul National University,\\
1 Gwanak-Ro, Gwanak-Gu,
Seoul 08826, South Korea,  \\
E-mail: fthslt14@gmail.com, trutnau@snu.ac.kr
\end{document}